\numberwithin{equation}{section}
\newtheorem{thm}{Theorem}[section]
\newtheorem{cor}{Corollary}[section]
\newtheorem{prop}{Proposition}[section]
\newtheorem{rem}{Remark}[section]
\newtheorem{example}{Example}
\journal{}
\begin{document}
	
	\begin{frontmatter}
		
		\title{A new proof of the Perron–Frobenius theorem: a variational approach}
		
	
	%
	

	\author[rv]{Yavdat~Il'yasov} 
	\ead{ilyasov02@gmail.com}
	\author[rv]{Nurmukhamet Valeev} 
	\ead{valeevnf@mail.ru}
	
	\address[rv]{Institute of Mathematics of UFRC RAS, 112, Chernyshevsky str., 450008, Ufa,
		Russia}

	
	%
	
	\begin{abstract}
		
		We develop the Perron–Frobenius theory using a variational approach and extend it to a set of arbitrary matrices, including those that are neither irreducible nor essentially positive, and non-preserved cones.
		We introduce a new concept called a "quasi-eigenvalue of a matrix", which is invariant under orthogonal transformations of variables, and  has various useful properties, such as determining the largest value of the real parts of the eigenvalues of a matrix.
		We extend Weyl's inequality for the eigenvalues to the set of arbitrary matrices and prove the new stability result to the Perron root of irreducible nonnegative matrices under arbitrary perturbations. As well as this, we obtain new types of estimates for the ranges of the sets of eigenvalues and their real parts.

	\end{abstract}

	\begin{keyword}
		minimax problems \sep matrix analysis\sep  Perron root \sep Birkhoff-Varga formula \sep Rayleigh quotient
		\MSC 47A75\sep  47A08\sep 47A25 \sep
		49J35  \sep 47A10 
		
	\end{keyword}

\end{frontmatter}

\section{Introduction}

The Perron–Frobenius theorem, established by  O. Perron (1907) and G. Frobenius (1912),  states that any irreducible non-negative matrix $A=(a_{i,j})$ has a  simple eigenvalue $\lambda^*(A) \in \mathbb{R}$ of the largest magnitude, moreover,  its  right and left eigenvectors $\phi^*$, $\psi^*$ are positive.
 Later, this theory 
has been extended by  M. Krein, M. Rutman (1948) to
operators on a Banach space that preserved cones. 
Another development of the theory was proposed by G. Birkhoff and R. Varga (1958), who extended the theory to the essentially positive matrices, including those for which the cone of positive vectors  $S^o_+:=\{u\in \mathbb{R}^n\mid~u_i> 0,i=1,\ldots,n\}$ is not necessarily preserved.
Furthermore, they  discovered the  following identity
\begin{equation}\label{BV}
 \lambda^*(A)=\lambda_{S_+}^*(A):=\sup _{u\in S^o_+}\inf _{v\in S^o_+}{\frac{\left\langle A u, v\right\rangle}{\left\langle u, v\right\rangle}}=\inf _{v\in S^o_+}\sup _{u\in S^o_+}{\frac{\left\langle A u, v\right\rangle}{\left\langle u, v\right\rangle}}
\end{equation} 
which is now known  as the  \textit{Birkhoff-Varga formula} \cite{BirVar}. Here $S_+=\overline{S^o_+}\setminus 0$, $\left\langle \cdot,\cdot \right\rangle$ denotes the inner product in $\mathbb{R}^n$.  The matrix $A$ is said to be irreducible if $A$ cannot be conjugated into block upper triangular form by a permutation matrix $P$, and $A$ is essentially positive matrix if it is  irreducible and has nonnegative elements off the  diagonal \cite{Varga}. Hereafter,  $\lambda_j(A) \in \mathbb{C}$,~$j=1,\ldots,n$  denotes the eigenvalues of $A$ which are repeated according to their multiplicity. For reader’s convenience, we provide the statement of   the Birkhoff–Varga theorem (see Theorem 2.1, Theorem 8.2 in \cite{Varga})



\noindent
{\bf Theorem} (\textit{Birkhoff–Varga}).
	\textit{Assume that $A$ is an essentially positive matrix. Then \eqref{BV} holds true, moreover}
		\begin{description}
			\item[a)] \textit{$\lambda_{S_+}^*(A)$ is a simple eigenvalue of $A$, and the corresponding right and left eigenvectors $\phi^*$, $\psi^*$ are positive}. 
		\item[b)] $\lambda_{S_+}^*(A)= 
		\max\{\operatorname{Re}\,\lambda_j(A),~j=1,\ldots,n\}$.
		\item[c)] \textit{If in addition, $A$ is a nonnegative matrix.  Then } $\lambda_{S_+}^*(A)=
		\max\{|\lambda_j(A)|,~j=1,\ldots,n\}$.
		\item[d)] $\lambda_{S_+}^*(A)$ increases when any element of $A$ increases. 
		\end{description}
		It is common to call an eigenvalue of a matrix the Perron root if it equals the spectral radius $\rho(A)$ of $A$. Thus, under the hypothesis of assertion c) of the theorem the quantity $\lambda_{S_+}^*(A)$  corresponds to the Perron root of $A$. At the same time, this quantity for the essentially positive matrices  is not necessarily equal to the spectral radius, moreover, it can take the values of any sign. It makes sense to call $\lambda_{S_+}^*(A)$ defined by  \eqref{BV} the \textit{quasi-Perron root}.

There are numerous applications of the Perron–Frobenius theory, see, e.g.,  \cite{Berman, Meyer2, Pillai} and references therein. 
This topic has gained our attention in light of the study initiated by the first author in  \cite{IlyasFunc,IvanIlya,IlyasChaos, il2023_bifSYST, Salazar},  where the saddle-node bifurcations of  nonlinear equations of the form $F(u,\lambda):=T(u)-\lambda G(u)=0$ are determined in terms of the   saddle points of the quotient 
$$
\mathcal{R}(u,v):=\frac{\left\langle T(u), v\right\rangle}{\left\langle G(u), v\right\rangle},~~(u,v) \in U\times V,~\left\langle G(u), v\right\rangle \neq 0.
$$
Here $U,V$ are appropriate subsets of a normed space $X$, $T,G: X \mapsto X^*$ are given maps. In particular, the saddle points of $\mathcal{R}(u,v)$  can be found  using the minimax formula (see \cite{IlyasFunc, IlyasChaos, il2023_bifSYST})
\begin{equation}\label{IG}
	\lambda^*=\sup _{u\in U}\inf _{v\in V}\mathcal{R}(u,v).
\end{equation}

The minimax formula \eqref{IG} can be justified by directly substituting the  saddle point $(u^*,v^*)$ of $\mathcal{R}(u,v)$  into \eqref{IG} if it is known in advance, see \cite{IvanIlya, IlIvan1, IlyasChaos}. It was this method that Birkhoff and Varga had used in
\cite{BirVar} to justify the minimax formula \eqref{BV} (see also  \cite{Friedland}). However, for non-linear and infinite-dimensional equations finding solutions is itself a challenging problem.
This entails the following problem
 \begin{description}
 	\item[A:] \textit{Could one find the saddle point $(u^*,v^*)$ of $\mathcal{R}(u,v)$ directly through variational problem \eqref{IG}?}. 
 \end{description}
The reader may find some answers to this problem in  \cite{bobkov, IlyasFunc, IlyasPHD, IlyasChaos, il2023_bifSYST, Salazar}, which deal with variational problems of type \eqref{IG}, including those corresponding to nonlinear partial differential equations. At the same time, this field of study is still in its early stages, and many important questions remain unanswered. In this regard,  \textit{finding the quasi-Perron root of the matrix $A$ and the  eigenvectors $\phi^*, \psi^*$  directly through variational problem \eqref{BV}} may be viewed as a first-step problem to \textbf{A}.

The extension of Kerin-Rutman's theory to cone-preserving matrices has received considerable attention (see, e.g., \cite{Barker, Birkh, Burb, Gautier, Friedland, lemmens, Tam2}). In particular, a generalization of the Birkhoff-Varga formula \eqref{BV} for cone-preserving matrices was studied in \cite{ Friedland}.
Note that \eqref{BV} implies the well-known \textit{Collatz-Wieland formula}:
\begin{equation*}
	\lambda^*_{S_+}(A)= \max_{u>0}\min_{e\in \{e_1,\ldots,e_n\}} \frac{\left\langle A u, e\right\rangle}{\left\langle u, e\right\rangle},
\end{equation*}
where  $e_1:=(1,0,\ldots,0)^T,...,e_n:=(0,\ldots,0,1)^T \in \mathbb{R}^n$ (see, e.g., \cite{IvanIlya}). Methods based on this formula, the so-called Wieland's approach, were used to generalize the Perron-Frobenius theory to a set of matrices preserving cones \cite{Barker, Marek1, Marek, Tam}.
 The first results on generalizing Perron's theorem to matrices  non-preserving cones were obtained by Birkhoff and Varga (1958). 
A remarkable feature of their work was that their proposed method, along with the Perron root made it possible to determine the largest value of the real parts of the eigenvalues of matrices. Finding the largest real part of the eigenvalues of operators is important in many problems, including the stability analysis of solutions to differential equations, detection of the Hopf bifurcation,  analysis of iterative algorithms,  etc, see, e.g., \cite{Arnold, chen,  Hale, Hassard, Krasnoselskii, Meyer, Varga}. It is worth noting that in applications, and in particular in finding bifurcations of solutions, it arises  matrices that are not essentially positive or easy to classify of their type  (see, e.g., \cite{chen, Salazar}), moreover, solutions are often sought in non-invariant cones. Nevertheless, the development of the Birkhoff-Varga approach did not receive as much attention as it should have. This naturally leads us to the following general problem: 
\begin{description}
	\item[B:] \textit{Determine the widest class of matrices and cones for which the Perron-Birkhoff theory still holds, possibly in a generalized form.}
\end{description}


Let us state  our main results. In what follows,  $(M_{n\times n}(\mathbb{R}),\|\cdot\|_M)$ denotes the space of real $n\times n$ matrices endowed by standard operator norm $\|\cdot\|_M$, $\overline{K}(\mathbb{R}^n)$ is a  set of self-dual solid convex cones in $\mathbb{R}^n$, i.e.,  $\overline{C} \in \overline{K}(\mathbb{R}^n)$ if  $\alpha u +\beta w \in \overline{C}$, $\forall u,w \in \overline{C}$, $\alpha, \beta \geq 0$,  
	  $\overline{C}^{*}=\{u\in\mathbb{R}^n\mid \langle w,u\rangle \geq 0, \forall w\in \overline{C}\}=\overline{C}$, and $C^o:=$int$\overline{C}\neq \emptyset$ (see \cite{Berman, Krein}).  We denote $C:=\overline{C}\setminus 0$,  $K(\mathbb{R}^n):=\{C\mid ~\overline{C}\in \overline{K}(\mathbb{R}^n)\}$. The subset of cones from $K(\mathbb{R}^n)$ that are the positive orthant's orthogonal transform will be denoted by $K_O(\mathbb{R}^n)$, i.e., $K_O(\mathbb{R}^n):=\{C \in K(\mathbb{R}^n)\mid C=US_+,~U \in O(n)\}$, where $O(n)$ is the group of  orthogonal matrixes. We call 
	  $$
	  \displaystyle{\lambda(u,v):=\frac{\left\langle A u, v\right\rangle}{\left\langle u, v\right\rangle}}, ~~ \langle u, v\rangle \neq 0, ~~u,v \in \mathbb{R}^n,
	  $$
	  the extended Rayleigh quotient (cf. \cite{IlyasFunc,il2023_bifSYST}). Note that  for $C \in K(\mathbb{R}^n)$, $\lambda(u,v)$ is well-defined if $ (u,v)  \in C \times C^o$ or  $ (u,v)  \in C^o \times C$. 
	  Let $X,Y$ be subsets of $\mathbb{R}^n$ such that $\lambda(u,v)$ is well-defined on  $ X \times Y$. We say that \textit{the minimax principle} for $\lambda(u,v)$ is satisfied in $X \times Y$  if 
\begin{align}\label{MminD}
-\infty<\sup_{u\in X}\inf_{v \in Y}\lambda(u,v)=\inf_{v \in Y}\sup_{u\in X}\lambda(u,v)<+\infty. 
\end{align} 
Vectors $u_C(A) \in C$ and  $v_C(A) \in C$ is said to be the \textit{right and left quasi-eigenvectors} of $A$  in $C$ if the following is fulfilled
\begin{align}\label{eq:DefQQ}
	&\overline{\lambda}_C(A):=\sup_{u\in C}\inf_{v \in C^o}\lambda(u,v)=\inf_{v \in C^o}\lambda(u_C(A),v),\\
	&\underline{\lambda}_C(A):=\inf_{v \in C}\sup_{u\in C^o}\lambda(u,v)=\sup_{u\in C^o}\lambda(u,v_C(A)). \label{eq:DefQQ2}
\end{align}
We call $\overline{\lambda}_C(A)$ the upper quasi-eigenvalue, and $\underline{\lambda}_C(A)$
the lower quasi-eigenvalue  of $A$  in $C$.  
Furthermore, if 
\begin{align*}
	&\partial_u\lambda(u_C(A),v_C(A))=0,~~~\partial_v\lambda (u_C(A),v_C(A))=0,\\ &\lambda_C(A):=\overline{\lambda}_C(A)=\underline{\lambda}_C(A)=\lambda (u_C(A),v_C(A)),
	\label{sadd}
\end{align*}
then  $(u_C(A),v_C(A))$ is said to be  \textit{saddle point}  of $\lambda(u,v)$ in $C \times C$.
Observe,  this implies that $\lambda_C(A)$ is an eigenvalue of $A$ and  $u_C(A)$, $v_C(A)$ are the associated right and left eigenvectors of $A$.

 Our first  result provides certain answers to problems \textbf{A},  \textbf{B}. 

\begin{thm}\label{theorem1} 
	\par 
	Assume that  $A \in M_{n\times n}(\mathbb{R})$, $C \in K(\mathbb{R}^n)$. 
	Then  the minimax principles for $\lambda(u,v)$ are satisfied in $C\times C^o$ and $C^o\times C$
	\begin{equation}\label{eq:mimaPrin}
		\sup_{u\in C}\inf_{v \in C^o}\lambda(u,v)=\inf_{v \in C^o}\sup_{u \in C}\lambda(u,v)~~\mbox{and}~~\sup_{u\in C^o}\inf_{v \in C}\lambda(u,v)=\inf_{v \in C}\sup_{u \in C^o}\lambda(u,v)
	\end{equation}
	 Moreover, there exist  right and left quasi-eigenvectors $u_C(A), v_C(A) \in C$ of $A$  in $C$. 
		
		If  $u_C(A) \in C^o$  $(v_C(A) \in C^o)$, then the minimax principle for $\lambda(u,v)$ is satisfied in $C^o\times C^o$, and 
	\begin{align*}
		&\overline{\lambda}_C(A)=\lambda_C(A):=\sup_{u\in C^o}\inf_{v \in C^o}\lambda(u,v)=\inf_{v \in C^o}\sup_{u \in C^o}\lambda(u,v),\\
		(&\underline{\lambda}_C(A)=\lambda_C(A):=\sup_{u\in C^o}\inf_{v \in C^o}\lambda(u,v)=\inf_{v \in C^o}\sup_{u \in C^o}\lambda(u,v)).
	\end{align*}
	Furthermore, if  $u_C(A), v_C(A) \in C^o$, then $\lambda_C(A)=\lambda(u_C(A),v_C(A))$  is eigenvalue and  $u_C(A)$, $v_C(A)$ are corresponding right and left eigenvectors of $A$ in $C$.
	 			\end{thm}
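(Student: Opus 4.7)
The plan is to recognise $\lambda(u,v) = \langle Au, v\rangle/\langle u, v\rangle$ as a quasi-linear fractional function in each variable and apply Sion's minimax theorem on compact cross-sections of the cone. Fix reference points $u_0, v_0 \in C^o$ and set
\begin{equation*}
X := \{u \in \overline{C} : \langle u, v_0\rangle = 1\}, \qquad Y := \{v \in C^o : \langle u_0, v\rangle = 1\}.
\end{equation*}
Self-duality of $\overline{C}$ and $v_0 \in C^o$ give $\langle u, v_0\rangle > 0$ for every $u \in C$, so $X$ is a compact convex set and $Y$ a bounded convex set open in its supporting hyperplane. Because $\lambda$ is $0$-homogeneous in $u$ and in $v$ separately, both minimaxes in \eqref{eq:mimaPrin} reduce to the corresponding problems on $X \times Y$ (and on the twin sections obtained by swapping the roles of $C$ and $C^o$).

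For each $v \in Y \subset C^o$, $\langle u, v\rangle > 0$ on $X$, so $\lambda(\cdot, v)$ is continuous on the compact set $X$; its super-level set $\{u : \langle A^T v - tv, u\rangle \geq 0\} \cap X$ is the intersection of $X$ with a half-space, hence convex, so $\lambda(\cdot, v)$ is quasi-concave. Symmetrically, $\lambda(u, \cdot)$ is continuous and quasi-convex on $Y$ for each $u \in X$. Sion's minimax theorem then yields the first identity of \eqref{eq:mimaPrin}; the second follows by the mirror argument. Finiteness comes from evaluating at the reference points: $\inf_{v\in C^o}\sup_{u\in C}\lambda \leq \sup_{u\in C}\lambda(u,v_0) < \infty$ by compactness of $X$, and $\sup_{u\in C}\inf_{v\in C^o}\lambda \geq \inf_{v\in C^o}\lambda(u_0,v) > -\infty$ because $u_0 \in C^o$ makes $\lambda(u_0,\cdot)$ continuous on the compact closure $\overline{Y}$. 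Existence of $u_C(A)$ follows because $u \mapsto \inf_{v\in Y}\lambda(u,v)$ is an infimum of continuous functions, hence upper semicontinuous on the compact set $X$; the analogous argument on the second minimax produces $v_C(A) \in C$.

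For the interior statements, suppose $u_C(A) \in C^o$. The chain
\begin{equation*}
\overline{\lambda}_C(A) = \inf_{v\in C^o}\lambda(u_C(A),v) \leq \sup_{u\in C^o}\inf_{v\in C^o}\lambda \leq \sup_{u\in C}\inf_{v\in C^o}\lambda = \overline{\lambda}_C(A)
\end{equation*}
forces equality throughout, and combining this with weak duality and the already established $\inf_{v\in C^o}\sup_{u\in C}\lambda = \overline{\lambda}_C(A)$ gives the minimax identity in $C^o \times C^o$. If additionally $v_C(A) \in C^o$, the symmetric argument yields $\underline{\lambda}_C(A) = \overline{\lambda}_C(A) =: \lambda_C(A)$, and the saddle-point sandwich
\begin{equation*}
\inf_{v\in C^o}\lambda(u_C(A),v) \leq \lambda(u_C(A),v_C(A)) \leq \sup_{u\in C^o}\lambda(u,v_C(A))
\end{equation*}
pins down $\lambda(u_C(A),v_C(A)) = \lambda_C(A)$. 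Since $(u_C(A),v_C(A))$ is then an interior critical point of $\lambda$ in $C^o \times C^o$, differentiating $\lambda(u,v) = \langle Au,v\rangle/\langle u,v\rangle$ and using $\partial_u\lambda = (A^T v - \lambda v)/\langle u,v\rangle$ and $\partial_v\lambda = (Au - \lambda u)/\langle u,v\rangle$ produces the eigenvector equations $Au_C(A) = \lambda_C(A)u_C(A)$ and $A^T v_C(A) = \lambda_C(A)v_C(A)$.

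The main obstacle I anticipate is the boundary behaviour of $\lambda$: when $u \in \partial C$ the denominator $\langle u,v\rangle$ can vanish for some $v \in \partial C$, so $\lambda(u,\cdot)$ may blow up and the extrema over the open set $C^o$ need careful bounding. Self-duality of $\overline{C}$ is exactly what rescues the argument, guaranteeing $\langle u,v\rangle > 0$ whenever $v \in C^o$ (and conversely), which keeps the relevant cross-sections compact and Sion's hypotheses applicable on them.
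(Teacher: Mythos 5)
Your proof is correct and rests on the same key tool (Sion's minimax theorem) as the paper's, but your technical execution is cleaner in two respects. First, the paper works with $B_1=\{x\in C:\|x\|\leq 1\}$, which fails to be compact because the origin is missing, and then has to exhaust it by compact convex sets $B_1(\epsilon)$, apply Sion on each $B_1(\epsilon)\times C^o$, and pass to the limit. You instead normalize by a fixed $v_0\in C^o$, i.e.\ take $X=\{u\in\overline C:\langle u,v_0\rangle=1\}$, and self-duality of $\overline C$ (interior points of the dual cone pair strictly positively with every nonzero element of the cone) makes $X$ automatically compact and convex, so Sion applies in a single step with no exhaustion needed. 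Second, for quasilinearity the paper's Proposition~2.1 differentiates $\lambda(\alpha u+(1-\alpha)w,v)$ in $\alpha$ and inspects the sign of $f'$, whereas you observe directly that $\{u:\lambda(u,v)\geq t\}=\{u:\langle u,A^Tv-tv\rangle\geq 0\}$ is a half-space intersected with $X$, which is both shorter and more transparent. The remainder of your argument -- finiteness, upper semicontinuity of $u\mapsto\inf_v\lambda(u,v)$ on the compact cross-section to extract $u_C(A)$, the chain of inequalities under $u_C(A)\in C^o$, and the interior critical-point computation yielding the eigenvector equations -- matches the paper's reasoning closely. One small point you leave implicit (and which is worth a sentence): the passage from Sion on $X\times Y$ back to $C\times C^o$ uses $0$-homogeneity of $\lambda$ in each variable separately; you state this but do not display the equalities $\sup_{u\in C}\inf_{v\in C^o}\lambda=\sup_{u\in X}\inf_{v\in Y}\lambda$ and $\inf_{v\in C^o}\sup_{u\in C}\lambda=\inf_{v\in Y}\sup_{u\in X}\lambda$.
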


Notice that \eqref{eq:mimaPrin} implies $\overline{\lambda}_C(A)\geq \underline{\lambda}_C(A)$. Indeed, by \eqref{eq:mimaPrin}
\begin{align*}
\underline{\lambda}_C(A)=\sup_{u\in C^o}\inf_{v \in C}\lambda(u,v)\leq \sup_{u\in C^o}\inf_{v \in C^o}\lambda(u,v)\leq  \sup_{u\in C}\inf_{v \in C^o}\lambda(u,v)=\overline{\lambda}_C(A).
\end{align*}	
	
	\begin{example}
	The minimax principle for $\lambda(u,v)$ may not hold in $C^o\times C^o$. Indeed, consider 
	$$
	A=
	\left( {\begin{array}{cc}
			2 & 0\\
			0 & 1 \\
	\end{array} } \right),~ ~C=S_+:=\{x \in \mathbb{R}^2\setminus 0\mid ~x_1\geq 0,~x_2\geq 0\}.
	$$
	Observe,
	\begin{equation*}
		\sup _{u \in S_+^o}\inf _{v\in S_+^o}\frac{\left\langle A u, v\right\rangle}{\left\langle u, v\right\rangle}=	\sup _{u \in S_+^o}\inf _{v\in S_+}\frac{\left\langle A u, v\right\rangle}{\left\langle u, v\right\rangle}=1,~~\inf _{v\in S_+^o}\sup _{u \in S_+^o}\frac{\left\langle A u, v\right\rangle}{\left\langle u, v\right\rangle}=\inf _{v\in S_+^o}\sup _{u \in S_+}\frac{\left\langle A u, v\right\rangle}{\left\langle u, v\right\rangle}=2.
	\end{equation*}	
	Meanwhile, 
	$$
	\inf _{v\in S_+^o}\frac{\left\langle A u, v\right\rangle}{\left\langle u, v\right\rangle}|_{u=(1,0)^T}=2~~\Rightarrow~\sup _{u \in S_+}\inf _{v\in S_+^o}\frac{\left\langle A u, v\right\rangle}{\left\langle u, v\right\rangle}=2,
	$$
	and thus, $\overline{\lambda}_{S_+}=\sup_{u\in S_+}\inf_{v \in S_+^o}\lambda(u,v)=\inf_{v \in S_+^o}\sup_{u\in S_+}\lambda(u,v)=2$, $u_{S_+}(A)=(1,0)^T$. By the same reasoning, $\underline{\lambda}_{S_+}=\sup_{u\in S_+^o}\inf_{v \in S_+}\lambda(u,v)=\inf_{v \in S_+}\sup_{u\in S_+^o}\lambda(u,v)=1$, $v_{S_+}(A)=(0,1)^T$.
	\end{example}
	
	\begin{example}
 The  right and left quasi-eigenvectors $u_C(A), v_C(A)$ of matrices may be not eigenvectors in the usual sense, and the  quasi-eigenvalues $\overline{\lambda}_C(A)$, $\underline{\lambda}_C(A)$ may be not eigenvalues. Indeed, consider 
$$
A=
\left( {\begin{array}{cc}
		1 & -1\\
		1 & 1 \\
\end{array} } \right),~ ~C=S_+:=\{x \in \mathbb{R}^2\setminus 0\mid ~x_1\geq 0,~x_2\geq 0\}.
$$
Then $\lambda_1(A)=1-i$, $\lambda_2(A)=1+i$ with corresponding eigenvectors $\phi_1=(i,1)^T$, $(1,i)^T$. However by Theorem \ref{theorem4} below, we have  
$	\overline{\lambda}_C(A)=\underline{\lambda}_C(A)=1=\min_{i=1,2}{\operatorname{Re} \lambda_i(A)}=\max_{i=1,2}{\operatorname{Re} \lambda_i(A)}$,
with the right and left quasi-eigenvectors equal to $(1,0)^T$.
		\end{example}
		\begin{rem}
			Theorem \ref{theorem1} entails that if $C^o$  does not contain  right and left eigenvectors of $A$, then $ v_C(A) \in \partial C$ or/and $u_C(A)\in \partial C$. 
			\end{rem}	
	 	It is important to note that the concept of quasi-eigenvalues $\overline{\lambda}_C(A)$, $\underline{\lambda}_C(A)$ of the matrix $A \in M_{n\times n}(\mathbb{R})$   in $C \in K(\mathbb{R}^n)$ is an invariant under the orthogonal change of variable $x=U^Ty$, $ U \in O(n)$.  Indeed, we have  
	 \begin{align}
	 	&\overline{\lambda}_{C}(A) =\sup _{u \in C}\inf _{v\in C^o}\frac{\left\langle A u, v\right\rangle}{\left\langle u, v\right\rangle}=\sup _{u\in U^TC}\inf _{v\in U^TC^o}{\frac{\left\langle U^TA U u, v\right\rangle}{\left\langle u, v\right\rangle}}=\overline{\lambda}_{U^TC}( U^TA U),\label{inv}\\
	 	&\underline{\lambda}_{C}(A) =\sup _{u \in C^o}\inf _{v\in C}\frac{\left\langle A u, v\right\rangle}{\left\langle u, v\right\rangle}=\sup _{u\in U^TC^o}\inf _{v\in U^TC}{\frac{\left\langle U^TA U u, v\right\rangle}{\left\langle u, v\right\rangle}}=\underline{\lambda}_{U^TC}( U^TA U).\label{inv2}
	 \end{align}	
	
		Let $\lambda_i(A)$ for $i\in \{1,\ldots,n\}$ be a real eigenvalue of $A$. We denote by $\phi_{i}, \psi_{i}$  right and left eigenvectors corresponding  $\lambda_i(A)$. In what follows, we use notation $\operatorname{span}(Z)$ for the linear span of a set $Z \subset \mathbb{R}^n$.

	\begin{cor}\label{cor1}
		Let $\lambda_i(A)$ for $i\in \{1,\ldots,n\}$ be a real eigenvalue of the matrix $A$.
		\begin{description}
			\item[ {\it (i)}]  If there exists a  right (left) eigenvector of $A$ such that $\operatorname{span}(\phi_i)\cap C^o \neq \emptyset$ ($\operatorname{span}(\psi_i)\cap C^o \neq \emptyset$), then  $\lambda_i(A)\leq \underline{\lambda}_C(A)$ $(\lambda_i(A)\geq \overline{\lambda}_C(A))$. 
			\item[ {\it (ii)}]   If there exist  right and left eigenvector such that $ \operatorname{span}(\phi_i)\cap C^o\neq \emptyset$,  $\operatorname{span}(\psi_i)\cap C^o \neq \emptyset$, then   $\overline{\lambda}_C(A)=\underline{\lambda}_C(A)=\lambda_i(A)$. The inverse statement is also true.
		\end{description}
	\end{cor}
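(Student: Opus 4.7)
The strategy is to exploit the self-duality of $\overline{C}$: when a real eigenvector lies in $C^o$, the extended Rayleigh quotient $\lambda(u,v)$ becomes identically equal to $\lambda_i(A)$ along one of its variables, a constant which can then be plugged into the saddle formulas guaranteed by Theorem \ref{theorem1}.

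For the right-eigenvector case in (i), rescale $\phi_i$ so that $\phi_i \in C^o$. By self-duality of the solid cone $\overline{C}$, $\langle \phi_i, v\rangle > 0$ for every $v \in C$, and $A\phi_i = \lambda_i \phi_i$ gives $\lambda(\phi_i, v) = \lambda_i$ for all $v \in C$. Combining the second identity in \eqref{eq:mimaPrin} with the definition of $\underline{\lambda}_C(A)$ yields $\underline{\lambda}_C(A) = \sup_{u\in C^o}\inf_{v\in C}\lambda(u,v)$, so testing $u = \phi_i$ gives $\underline{\lambda}_C(A) \geq \lambda_i$. The left-eigenvector case is symmetric: with $\psi_i \in C^o$, one has $\langle u, \psi_i\rangle > 0$ for $u \in C$, so $\lambda(u, \psi_i) = \langle u, A^T\psi_i\rangle/\langle u,\psi_i\rangle = \lambda_i$, and the first identity in \eqref{eq:mimaPrin} combined with testing $v = \psi_i$ yields $\overline{\lambda}_C(A) = \inf_{v\in C^o}\sup_{u\in C}\lambda(u,v) \leq \lambda_i$.

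The forward direction of (ii) is then immediate from (i) and the chain $\lambda_i \leq \underline{\lambda}_C(A) \leq \overline{\lambda}_C(A) \leq \lambda_i$, using $\underline{\lambda}_C(A) \leq \overline{\lambda}_C(A)$ noted in the remark after Theorem \ref{theorem1}. For the converse, assume $\overline{\lambda}_C(A) = \underline{\lambda}_C(A) = \lambda_i(A)$ and let $u_C(A), v_C(A) \in C$ be the quasi-eigenvectors produced by Theorem \ref{theorem1}. The saddle inequalities $\lambda(u_C(A),v) \geq \lambda_i$ for $v \in C^o$ and $\lambda(u,v_C(A)) \leq \lambda_i$ for $u \in C^o$, together with self-duality and continuity, translate into $(A - \lambda_i I) u_C(A) \in \overline{C}$ and $(\lambda_i I - A^T) v_C(A) \in \overline{C}$. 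Pairing the first with $v_C(A)$ and the second with $u_C(A)$ produces two inner products whose sum vanishes yet each is non-negative, hence both equal zero; case-analysis on whether the vectors lie on $\partial C$ or in $C^o$ will then let one conclude that $u_C(A)$ and $v_C(A)$ may be taken in $C^o$, at which point the last assertion of Theorem \ref{theorem1} certifies them as right and left eigenvectors for $\lambda_i(A)$, giving the required intersections with $C^o$.

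The principal obstacle is the converse in (ii): closing the gap when a quasi-eigenvector could a priori lie on $\partial C$ without being an eigenvector of $\lambda_i$. Handling this cleanly should rely on the interior-upgrade statement in Theorem \ref{theorem1} (that $u_C(A) \in C^o$ alone already promotes the minimax to $C^o \times C^o$) combined with the equality $\overline{\lambda}_C(A) = \underline{\lambda}_C(A)$ to force both quasi-eigenvectors into $C^o$ simultaneously.
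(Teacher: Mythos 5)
Parts (i) and the forward implication of (ii) in your proposal are correct and track the paper's own argument closely. Where the paper fixes the quasi-eigenvector $v_C(A)$ and plugs $\phi_i$ into $\underline{\lambda}_C(A)=\sup_{u\in C^o}\lambda(u,v_C(A))$, you instead use the minimax identity $\underline{\lambda}_C(A)=\sup_{u\in C^o}\inf_{v\in C}\lambda(u,v)$ and test $u=\phi_i$, observing that $\lambda(\phi_i,\cdot)$ is identically $\lambda_i$. The two routes are interchangeable; both rest on $\phi_i\in C^o$ and self-duality, so there is no real difference there.

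The converse in (ii) is another matter. You were right to flag it as the ``principal obstacle,'' but the situation is worse than a missing lemma: the converse, as stated, is false, and no case analysis can rescue it. The paper's own ``proof'' is just the phrase ``can be proved similar,'' which conceals the same gap. Here is a concrete counterexample. Take $n=3$, $C=S_+$, and
$$
A=\begin{pmatrix}0&-1&0\\1&0&0\\0&0&0\end{pmatrix}.
$$
This $A$ is normal with $\lambda_1(A)=i$, $\lambda_2(A)=-i$, $\lambda_3(A)=0$, and it is already in the canonical form \eqref{eq:canNorm} with invariant subspaces $V_1=\operatorname{span}(e_1,e_2)$ and $V_2=\operatorname{span}(e_3)$. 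Both satisfy $V_j\cap S_+^o=\emptyset$, so Theorem~\ref{theorem4}$(1^o)$ gives $\overline{\lambda}_{S_+}(A)=\underline{\lambda}_{S_+}(A)=\max_j\operatorname{Re}\lambda_j(A)=0=\lambda_3(A)$. Thus the hypothesis of the converse holds with the real eigenvalue $\lambda_3(A)=0$. However, every right and left eigenvector for $\lambda_3(A)=0$ is a multiple of $e_3$, and $\operatorname{span}(e_3)\cap S_+^o=\emptyset$. So the conclusion of the converse fails. Your own sketch already pinpoints the failure mode: from $\overline{\lambda}_C(A)=\underline{\lambda}_C(A)=\lambda_i(A)$ one can, as you say, deduce $(A-\lambda_i I)u_C(A)\in\overline{C}$, $(\lambda_i I-A^T)v_C(A)\in\overline{C}$, and $\langle (A-\lambda_i I)u_C(A),v_C(A)\rangle=0$; but if both $u_C(A)$ and $v_C(A)$ lie on $\partial C$ (as they do here, e.g.\ $u_C(A)=v_C(A)=e_3$), the vanishing of this single pairing forces nothing, and no ``interior-upgrade'' is available because neither quasi-eigenvector is in $C^o$. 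So do not try to complete this step: the right fix is to either drop the converse or add a hypothesis (such as $u_C(A),v_C(A)\in C^o$, which by Theorem~\ref{theorem1} already makes them eigenvectors) that eliminates the boundary case.
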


	Some statements of the  Perron–Frobenius  and Birkhoff–Varga theorems still hold  under hypothesis of Theorem \ref{theorem1} and certain additional conditions.
	
	\begin{cor}\label{cor2}
		\begin{description}
			\item[{\it (i)}] If $A\in M_{n\times n}(\mathbb{R})$ is a nonnegative matrix, then 
			\begin{equation}\label{eq:Noneg1}
				\overline{\lambda}_{S_+}(A)\geq \max\{|\lambda_j(A)|,~j=1,\ldots,n\}>0,
			\end{equation}
			moreover, if in addition $\lambda_{S_+}(A)$ is an eigenvalue of $A$, then   
			\begin{equation}\label{eq:Noneg}
				\overline{\lambda}_{S_+}(A)= \max\{|\lambda_j(A)|,~j=1,\ldots,n\}>0.
			\end{equation}

			\item[{\it (ii)}] If $A\in M_{n\times n}(\mathbb{R})$ is a  matrix with nonnegative elements off the diagonal, then	
			\begin{equation}\label{eq:Re1}
				\overline{\lambda}_{S_+}(A)\geq
				\max\{\operatorname{Re}\lambda_j(A),~j=1,\ldots,n\},
			\end{equation}
			moreover, if in addition $\lambda_{S_+}(A)$ is a real part of an eigenvalue of $A$, then 
			\begin{equation}\label{eq:Re}
				\overline{\lambda}_{S_+}(A)=
				\max\{\operatorname{Re}\lambda_j(A),~j=1,\ldots,n\}.	
			\end{equation}

		\end{description}
	\end{cor}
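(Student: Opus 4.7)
The plan is to derive both parts from a lower bound of $\overline{\lambda}_{S_+}(A)$ by the spectrum of $A$, obtained through the classical ``absolute-value trick'' for nonnegative matrices, and then to promote that inequality to an equality as soon as $\overline{\lambda}_{S_+}(A)$ is known to coincide with an eigenvalue (respectively, with the real part of one).

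For part~(i), fix any eigenvalue $\lambda_j(A)\in\mathbb{C}$ with right eigenvector $\phi\in\mathbb{C}^n\setminus\{0\}$. Because $A$ is real and entrywise nonnegative, applying the triangle inequality componentwise to $A\phi=\lambda_j(A)\phi$ yields
\begin{equation*}
A|\phi| \;\geq\; |A\phi| \;=\; |\lambda_j(A)|\,|\phi|
\end{equation*}
in $\mathbb{R}^n$, where $|\phi|:=(|\phi_1|,\dots,|\phi_n|)^T\in S_+$. Pairing with an arbitrary $v\in S_+^o$, for which $\langle|\phi|,v\rangle>0$, and dividing gives $\lambda(|\phi|,v)\geq |\lambda_j(A)|$. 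Taking $\inf_{v\in S_+^o}$ and evaluating the outer supremum in the definition of $\overline{\lambda}_{S_+}(A)$ at $u=|\phi|$ produces $\overline{\lambda}_{S_+}(A)\geq |\lambda_j(A)|$. Since $j$ is arbitrary, \eqref{eq:Noneg1} follows. For the equality \eqref{eq:Noneg}, once $\lambda_{S_+}(A)$ is assumed to be an eigenvalue the reverse bound is automatic: $\overline{\lambda}_{S_+}(A)=\lambda_{S_+}(A)$ is a real eigenvalue, and since nonnegativity of $A$ forces $\overline{\lambda}_{S_+}(A)\geq 0$, one has $\overline{\lambda}_{S_+}(A)=|\overline{\lambda}_{S_+}(A)|\leq \max_j|\lambda_j(A)|$.

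For part~(ii) I would use the shift trick. Choose $c\geq 0$ large enough that $B:=A+cI$ is entrywise nonnegative, which is possible because only the diagonal entries of $A$ may be negative. The spectrum of $B$ is $\{\lambda_j(A)+c\}_{j=1}^n$, and a direct computation gives the shift invariance
\begin{equation*}
\overline{\lambda}_{S_+}(A+cI)=\overline{\lambda}_{S_+}(A)+c.
\end{equation*}
Applying part~(i) to $B$ and using the elementary bound $|\lambda+c|\geq \operatorname{Re}\lambda+c$ (valid whenever $\operatorname{Re}\lambda+c\geq 0$, which is ensured by taking $c\geq -\min_j\operatorname{Re}\lambda_j(A)$) yields
\begin{equation*}
\overline{\lambda}_{S_+}(A)+c \;\geq\; \max_j|\lambda_j(A)+c| \;\geq\; \max_j\operatorname{Re}\lambda_j(A)+c,
\end{equation*}
which rearranges to \eqref{eq:Re1}. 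The equality \eqref{eq:Re} follows from the same trivial upper-bound argument as in part~(i), once $\lambda_{S_+}(A)$ is assumed to coincide with the real part of an eigenvalue.

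The only genuinely nontrivial ingredient is the componentwise inequality $A|\phi|\geq |A\phi|$ for nonnegative $A$ and complex $\phi$; everything else—the shift identity $\lambda_{A+cI}(u,v)=\lambda_A(u,v)+c$, the pairing with strictly positive test vectors in $S_+^o$, and the trivial upper bounds used for the equality statements—is routine. Crucially, irreducibility plays no role, which is precisely why the argument extends the classical Perron–Frobenius and Birkhoff–Varga statements.
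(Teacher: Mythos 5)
Your proof is correct and follows essentially the same approach as the paper: in part (i) the componentwise bound $A|\phi|\geq|\lambda_j(A)||\phi|$ paired against $v\in S_+^o$ yields the lower bound $\overline{\lambda}_{S_+}(A)\geq|\lambda_j(A)|$, with the equality promoted by noting that a nonnegative real eigenvalue is automatically dominated by the spectral radius; and in part (ii) the shift $A\mapsto A+\gamma I$ reduces to (i). The only cosmetic difference is that you isolate the shift-invariance identity $\overline{\lambda}_{S_+}(A+cI)=\overline{\lambda}_{S_+}(A)+c$ as an explicit step, which the paper uses implicitly.
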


		\begin{cor} \label{cortheorem4} 		Assume that $A$ is a real symmetric matrix and $C\in K_O(\mathbb{R}^n)$.  Then the following holds:
		\begin{description}
			\item[ {\rm (a) }] If $\operatorname{span}(\phi_i)\cap C^o  = \emptyset$ for any eigenvector $\phi_i$, $i\in \{1,\ldots,n\}$ of $A$, then
			$$
				\overline{\lambda}_{C}(A)=	\max_{1\leq  j\leq n}\lambda_j(A),~~ \underline{\lambda}_{C}(A)=\min_{1\leq  j\leq n}\lambda_j(A)
		$$
			\item[{\rm (b) } ]  If there exists an  eigenvector $\phi_i$ of $A$ such that $\operatorname{span}(\phi_i)\cap C^o \neq \emptyset$, then $\overline{\lambda}_C(A)=\underline{\lambda}_C(A)=\lambda_i(A)$.
		\end{description}
	\end{cor}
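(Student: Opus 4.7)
The plan is to reduce to the canonical cone $C=S_+$ via orthogonal invariance and then dispatch the two cases separately.

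Since $C\in K_O(\mathbb R^n)$, write $C = US_+$ with $U\in O(n)$ and set $B := U^T A U$. Then $B=B^T$, the spectra (with multiplicities) of $A$ and $B$ coincide, and the eigenvectors of $B$ are $U^T\phi_i$. In particular, $\operatorname{span}(U^T\phi_i)\cap S_+^o = U^T(\operatorname{span}(\phi_i)\cap C^o)$, so both hypotheses transfer verbatim. By the invariance identities \eqref{inv}--\eqref{inv2} the quasi-eigenvalues are preserved, so we may assume $C = S_+$ and that $A$ is symmetric.

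\textbf{Part (b)} is immediate: because $A = A^T$, every right eigenvector is simultaneously a left eigenvector with the same eigenvalue, so the single assumption $\operatorname{span}(\phi_i)\cap C^o\neq\emptyset$ fulfils both hypotheses of Corollary \ref{cor1}(ii), which then yields at once $\overline{\lambda}_C(A) = \underline{\lambda}_C(A) = \lambda_i(A)$.

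\textbf{Part (a)} I would prove by matching inequalities. The easy half $\overline{\lambda}_{S_+}(A)\leq\max_j\lambda_j(A)$ follows from the Rayleigh estimate $\lambda(u,u) = \langle Au,u\rangle/\langle u,u\rangle\leq\max_j\lambda_j(A)$: for $u \in S_+^o$ plug in $v = u$, and for $u \in \partial S_+$ approximate by $v_k = u + k^{-1}\mathbf{1}\in S_+^o$ and pass $k\to\infty$, then take the supremum in $u$. Dually $\underline{\lambda}_{S_+}(A)\geq\min_j\lambda_j(A)$. The matching reverse inequalities are the job of Theorem \ref{theorem4} applied to $A$ with $C = S_+$. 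Under hypothesis (a), the last assertion of Theorem \ref{theorem1} forces both quasi-eigenvectors $u_{S_+}(A), v_{S_+}(A)$ to sit on $\partial S_+$ (otherwise they would be eigenvectors whose spans meet $S_+^o$, contradicting (a)), and Theorem \ref{theorem4} is precisely the result tailored to this boundary regime, identifying $\overline{\lambda}_{S_+}(A)$ with $\max_j\operatorname{Re}\lambda_j(A)$ and $\underline{\lambda}_{S_+}(A)$ with $\min_j\operatorname{Re}\lambda_j(A)$. Symmetry makes every eigenvalue real, so $\operatorname{Re}\lambda_j(A) = \lambda_j(A)$ and the claimed equalities follow.

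The \textbf{main obstacle} is the lower bound in (a): the Rayleigh-type estimate and Corollary \ref{cor1}(i) only deliver one direction, and the genuine content lies in analysing the saddle-point structure on $\partial S_+$ --- controlling the support of the boundary quasi-eigenvectors $u_{S_+}(A)$ and $v_{S_+}(A)$ and using the symmetry $A = A^T$ to match the associated quasi-eigenvalues with the extreme members of $\{\lambda_j(A)\}$. This is precisely the delicate step that Theorem \ref{theorem4} is engineered to handle, and invoking it correctly is the crux of the proof.
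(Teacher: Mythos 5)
Your proof of part (b) coincides with the paper's: both reduce via orthogonal invariance and then invoke Corollary~\ref{cor1}(ii), using the fact that a symmetric matrix has $\phi_i=\psi_i$.

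For part (a), however, you take a genuinely different route from the paper, and the comparison is instructive. The paper diagonalizes $A$ (so $A=\operatorname{diag}(\lambda_1,\dots,\lambda_n)$, $\phi_i=e_i$) while keeping $C\in K_O(\mathbb R^n)$ arbitrary, observes that hypothesis~(a) forces $C$ to be a coordinate orthant, and then proves \emph{both} inequalities directly from the quotient: the upper bound by writing $\langle Au,v\rangle=\lambda_n\langle u,v\rangle+\sum_i(\lambda_i-\lambda_n)u_iv_i$ and using $u_iv_i\geq 0$, the lower bound by testing with $u=e_n$. This is elementary and self-contained, requiring nothing beyond Theorem~\ref{theorem1}. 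You instead reduce to $C=S_+$ with general symmetric $A$ and delegate the entire equality to Theorem~\ref{theorem4}$(1^o)$ applied to $A$ as a normal matrix (for which $l=0$ and the $V_j$ are precisely the eigenlines, so hypothesis~(a) is exactly the hypothesis of $(1^o)$). This is logically sound --- the paper's proof of Theorem~\ref{theorem4} goes through Theorem~\ref{theorem44} and the field-of-values argument and does not circle back to Corollary~\ref{cortheorem4} --- but it inverts the paper's order of development (Corollary~\ref{cortheorem4} is proved in Section~2, Theorem~\ref{theorem4} in Section~6) and rests on the considerably heavier normal-matrix machinery. Your preliminary Rayleigh-quotient estimate for the upper bound is correct but redundant once you invoke Theorem~\ref{theorem4}, which already delivers the equality in both directions. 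One small imprecision: the contrapositive of the last assertion of Theorem~\ref{theorem1} only forces \emph{at least one} of $u_{S_+}(A), v_{S_+}(A)$ to lie on $\partial S_+$, not both; but this remark is not load-bearing in your argument, since you verify the hypothesis of Theorem~\ref{theorem4}$(1^o)$ directly from~(a), so the proof stands.
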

	
	The following result takes advantage of the fact that the Perron–Frobenius theory in Theorem \ref{theorem1} extends to arbitrary matrices and cones.
	
\begin{thm}\label{theorem2}
	Assume that  $A \in M_{n\times n}(\mathbb{R})$, $C \in K(\mathbb{R}^n)$.
	\begin{align}
		 \mbox{If}~~v_C(A) \in C^o,~~\mbox{then}&\nonumber\\	&\overline{\lambda}_{C}(A+D)-\underline{\lambda}_{C}(A)\leq c_1(A,C)\|D\|_M, ~~\forall D \in M_{n\times n}(\mathbb{R}), \label{cont0}\\
		 \mbox{moreover, if}~ D\leq 0, ~\mbox{then}&\nonumber\\
		&\overline{\lambda}_{C}(A+D)\leq \underline{\lambda}_{C}(A), \label{cont0AD} \\
		\mbox{If}~~u_C(A) \in C^o,~~\mbox{then}&\nonumber\\ &\underline{\lambda}_{C}(A+D)-\overline{\lambda}_{C}(A)\geq -c_2(A,C)\|D\|_M, ~~\forall D \in M_{n\times n}(\mathbb{R}),  \label{cont00}\\
		\mbox{moreover, if}~ D\geq 0, ~\mbox{then}&\nonumber\\
		&\underline{\lambda}_{C}(A+D)\geq \overline{\lambda}_{C}(A),\label{cont00AD}
			\end{align}
	where 
	\begin{equation}
		c_1(A,C)=	\sup _{u \in C}\frac{\| u\| }{\langle u, v_{C}(A)\rangle},~~c_2(A,C)=	\sup _{v \in C}\frac{\| v\| }{\langle u_{C}(A), v \rangle}<+\infty.
	\end{equation}
In particular, if $u_C(A), v_C(A) \in C^o$, then 
	\begin{align}
		&|\overline{\lambda}_{C}(A+D)-\lambda_{C}(A)|\leq c_0(A,C) \|D\|_M, ~\forall D \in M_{n\times n}(\mathbb{R}), \label{contOC}\\
		&|\underline{\lambda}_{C}(A+D)-\lambda_{C}(A)|\leq c_0(A,C) \|D\|_M, ~\forall D \in M_{n\times n}(\mathbb{R}),\label{contOCO}
	\end{align}
	where $c_0(A,C):=\max\{c_1(A,C),c_2(A,C)\}$. 
\end{thm}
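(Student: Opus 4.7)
The plan is to exploit the minimax identities from Theorem~\ref{theorem1} by inserting the quasi-eigenvectors of $A$ as test vectors in the variational characterizations of $\overline{\lambda}_C(A+D)$ and $\underline{\lambda}_C(A+D)$. For \eqref{cont0} I would start from $\overline{\lambda}_C(A+D)=\inf_{v\in C^o}\sup_{u\in C}\frac{\langle(A+D)u,v\rangle}{\langle u,v\rangle}$, which is provided by \eqref{eq:mimaPrin}, and test with $v=v_C(A)\in C^o$ (admissible by hypothesis). Splitting $(A+D)u$ and using subadditivity of $\sup$ gives
\[
\overline{\lambda}_C(A+D)\leq\sup_{u\in C}\frac{\langle Au,v_C(A)\rangle}{\langle u,v_C(A)\rangle}+\sup_{u\in C}\frac{\langle Du,v_C(A)\rangle}{\langle u,v_C(A)\rangle}.
\]
The first summand equals $\underline{\lambda}_C(A)$ by the definition of $v_C(A)$ in \eqref{eq:DefQQ2}, noting that $\sup_{u\in C^o}$ and $\sup_{u\in C}$ coincide by continuity, since $\langle u,v_C(A)\rangle>0$ on $\overline{C}\setminus 0$ whenever $v_C(A)\in C^o$ (openness plus self-duality). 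A Cauchy--Schwarz--operator-norm bound $|\langle Du,v_C(A)\rangle|\leq\|D\|_M\|u\|\|v_C(A)\|$ then controls the perturbation by $c_1(A,C)\|D\|_M$ after the homogeneity-permitted normalization $\|v_C(A)\|=1$.

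For \eqref{cont00} I would symmetrically employ $\underline{\lambda}_C(A+D)=\sup_{u\in C^o}\inf_{v\in C}\frac{\langle(A+D)u,v\rangle}{\langle u,v\rangle}$ with the admissible test point $u=u_C(A)\in C^o$, using the superadditivity $\inf(f+g)\geq\inf f+\inf g$ to isolate the unperturbed term $\inf_{v\in C}\lambda(u_C(A),v)=\overline{\lambda}_C(A)$ (from \eqref{eq:DefQQ}) and absorb the perturbation into $-c_2(A,C)\|D\|_M$. Finiteness of $c_1(A,C),c_2(A,C)$ is the one nontrivial analytic ingredient: since $v_C(A)\in C^o$ and $C^o$ is open, there exists $r>0$ with $B_r(v_C(A))\subset\overline{C}$; using $\overline{C}^{*}=\overline{C}$, testing the feasible direction $w=v_C(A)-r\,u/\|u\|\in\overline{C}$ against any $u\in\overline{C}\setminus 0$ yields $\langle u,v_C(A)\rangle\geq r\|u\|$, so $c_1(A,C)\leq 1/r<+\infty$, and the analogous argument bounds $c_2$.

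The sign-constrained strengthenings \eqref{cont0AD} and \eqref{cont00AD} come from the observation that, under the corresponding cone-theoretic interpretation of the sign condition on $D$ (so that $-D\,\overline{C}\subset\overline{C}$ when $D\leq 0$, and $D\,\overline{C}\subset\overline{C}$ when $D\geq 0$), self-duality gives $\langle Du,v_C(A)\rangle\leq 0$ in the first case and $\langle Du_C(A),v\rangle\geq 0$ in the second, for $u,v\in\overline{C}$. This renders the perturbation term in the estimates above nonpositive (resp.\ nonnegative) and discards it entirely, upgrading the Lipschitz inequalities to the stated monotonicity.

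Finally, for \eqref{contOC}--\eqref{contOCO}, when both quasi-eigenvectors lie in $C^o$ Theorem~\ref{theorem1} gives $\overline{\lambda}_C(A)=\underline{\lambda}_C(A)=\lambda_C(A)$, and one sandwiches: \eqref{cont0} delivers $\overline{\lambda}_C(A+D)\leq\lambda_C(A)+c_1(A,C)\|D\|_M$, \eqref{cont00} delivers $\underline{\lambda}_C(A+D)\geq\lambda_C(A)-c_2(A,C)\|D\|_M$, and combining these with the general inequality $\underline{\lambda}_C(A+D)\leq\overline{\lambda}_C(A+D)$ noted right after Theorem~\ref{theorem1} yields two-sided control by $c_0(A,C)\|D\|_M$ on both quantities. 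The main obstacle I anticipate is settling the correct reading of the sign conditions $D\leq 0$ and $D\geq 0$ for a \emph{general} self-dual cone $C$ (as opposed to $S_+$, where the entrywise meaning aligns with cone-preservation); once this is fixed cone-theoretically via $\pm D\,\overline{C}\subset\overline{C}$, the rest of the argument is purely variational and reduces to the inequalities above.
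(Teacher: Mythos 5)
Your argument is correct and follows essentially the same route as the paper's proof: insert the frozen quasi-eigenvector $v_C(A)$ (resp.\ $u_C(A)$) as a test vector, split $(A+D)u$ by sub/superadditivity to isolate $\underline{\lambda}_C(A)$ (resp.\ $\overline{\lambda}_C(A)$), control the perturbation term by Cauchy--Schwarz and the operator norm, and sandwich to obtain \eqref{contOC}--\eqref{contOCO}. One minor simplification worth noting: you do not actually need the minimax identity for $A+D$ to start the estimate, since $\sup_{u}\inf_{v}\lambda(u,v)\leq\sup_{u}\lambda(u,v_C(A))$ holds trivially by restricting the infimum to the single point $v_C(A)\in C^o$, which is what the paper does. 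Your explicit observations — that the stated constant $c_1(A,C)$ presupposes the normalization $\|v_C(A)\|=1$, that finiteness of $c_1,c_2$ follows from $\langle u,v_C(A)\rangle\geq r\|u\|$ via self-duality and a ball $B_r(v_C(A))\subset\overline{C}$, and that for general $C\in K(\mathbb{R}^n)$ the sign conditions $D\lessgtr 0$ in \eqref{cont0AD}, \eqref{cont00AD} must be read cone-theoretically as $\langle Du,v\rangle\lessgtr 0$ for $u,v\in\overline{C}$ — are all correct and make explicit some details that the paper leaves implicit.
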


We say that a matrix $A$ has \textit{sign-constant elements off the diagonal}   if one of the following is satisfied: $a_{i,j}\geq 0$ or $a_{i,j}\leq 0$ for all $i,j \in \{1,\ldots,n\}$, $i\neq j$. In what follows, we denote by $M_{n\times n}^{isc}(\mathbb{R})$ the set irreducible matrix with  sign-constant elements off the diagonal.
		\begin{thm} \label{theorem3} 	
			
			Assume that $A \in M_{n\times n}^{isc}(\mathbb{R})$, then  $\lambda_{S_+}(A)$ is a simple eigenvalue of $A$,  and   $u_{S_+}, v_{S_+}
			$ are the corresponding positive right  and left eigenvectors. Moreover, $(u_{S_+}, v_{S_+})$ is a  saddle point of $\lambda(u,v)$ in $S_+$ which is unique up to the multipliers of $u_{S_+}$ and $v_{S_+}$. Furthermore,  the function $\lambda_{S_+}(\cdot)$ is continuous on $(M_{n\times n}(\mathbb{R}), \|\cdot\|_M)$ at any point $A \in M_{n\times n}^{isc}(\mathbb{R})$ in the following sense
			\begin{equation}\label{eq:ConC}
			\max\{|\underline{\lambda}_{S_+}(A+D)-\lambda_{S_+}(A)|,|\overline{\lambda}_{S_+}(A+D)-\lambda_{S_+}(A)|\}	\leq c_0(A,C) \|D\|_M, 
			\end{equation}
			$\forall D \in M_{n\times n}(\mathbb{R})$, where $c_0(A,C)$ does not depend on $D$,  moreover, $\lambda_{S_+}(A+D)\geq \lambda_{S_+}^*(A)$ if $D\geq 0$, and  $\overline{\lambda}_{S_+}(A+D)\leq \lambda_{S_+}^*(A)$ if $D\leq 0$.
		\end{thm}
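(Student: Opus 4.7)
The plan is to reduce to the classical Birkhoff--Varga theorem by a case analysis on the sign of the off-diagonal entries of $A$. Once a simple real eigenvalue with strictly positive right and left eigenvectors is produced, Theorem \ref{theorem1} and Corollary \ref{cor1}(ii) will deliver the saddle-point structure, and Theorem \ref{theorem2} will yield the stability estimate \eqref{eq:ConC} together with the monotonicity statements automatically.

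First I would treat the case $a_{ij}\geq 0$ for all $i\neq j$. Then $A$ is essentially positive in the sense of the introduction, so the Birkhoff--Varga theorem produces a simple real eigenvalue $\lambda^{*}_{S_+}(A)$ with strictly positive right and left eigenvectors $\phi^*,\psi^*\in S^o_+$. Corollary \ref{cor1}(ii) then forces $\overline{\lambda}_{S_+}(A)=\underline{\lambda}_{S_+}(A)=\lambda^{*}_{S_+}(A)$, and Theorem \ref{theorem1} identifies these positive eigenvectors (up to positive scaling) with the quasi-eigenvectors $u_{S_+}(A),v_{S_+}(A)$ and shows that the pair $(u_{S_+}(A),v_{S_+}(A))$ is a saddle point of $\lambda(\cdot,\cdot)$ in $S_+\times S_+$. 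The opposite case $a_{ij}\leq 0$ for $i\neq j$ is reduced to the previous one by passing to $-A$, which is essentially positive and irreducible: its Perron root $\mu^{*}$ with positive right and left eigenvectors $\phi^*,\psi^*$ provides the simple eigenvalue $-\mu^{*}$ of $A$ with the same positive eigenvectors, after which Corollary \ref{cor1}(ii) and Theorem \ref{theorem1} apply verbatim.

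Uniqueness of the saddle point up to positive scalar multipliers follows from the simplicity of the eigenvalue (inherited from the simplicity of the Perron root of $\pm A$), which makes the right and left eigenspaces one-dimensional; any other saddle point in $S_+^o\times S_+^o$ would be a positive right/left eigenvector pair for $\lambda_{S_+}(A)$. For the stability bound \eqref{eq:ConC}, the decisive fact is that both $u_{S_+}(A)$ and $v_{S_+}(A)$ lie in $S_+^o$, so the constants $c_1(A,S_+)$ and $c_2(A,S_+)$ of Theorem \ref{theorem2} are finite: on the compact set $S_+\cap\{\|\cdot\|=1\}$ the maps $u\mapsto\langle u,v_{S_+}(A)\rangle$ and $v\mapsto\langle u_{S_+}(A),v\rangle$ are continuous and strictly positive, hence bounded away from zero. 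Estimates \eqref{contOC}--\eqref{contOCO} then give \eqref{eq:ConC}, while the sign-monotonicity assertions are read off directly from \eqref{cont0AD} and \eqref{cont00AD}, using $\overline{\lambda}_{S_+}(A)=\underline{\lambda}_{S_+}(A)=\lambda_{S_+}(A)$.

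The main obstacle lies in the subcase of nonpositive off-diagonal entries, where neither the classical Perron--Frobenius theorem nor the original Birkhoff--Varga framework applies directly, and where the quasi-Perron root need not be positive or related to the spectral radius. The reduction $A\mapsto -A$ is effective only because Theorem \ref{theorem1} and Corollary \ref{cor1}(ii) characterize the saddle-point structure via the mere existence of positive right and left eigenvectors, with no sign restriction on the associated eigenvalue --- which is precisely what the sign flip supplies. Confirming that this reduction identifies $\lambda_{S_+}(A)$ on the nose, rather than merely producing some eigenvalue of $A$, is where Corollary \ref{cor1}(ii) does the crucial work.
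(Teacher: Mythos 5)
Your argument is technically sound, but it takes a fundamentally different and, in the context of this paper, somewhat circular route. You reduce the existence, simplicity and positivity of the eigenpair to the classical Birkhoff--Varga theorem (applied to $A$ or to $-A$, according to the sign of the off-diagonal entries), and then use Corollary~\ref{cor1}(ii), Theorem~\ref{theorem1} and Theorem~\ref{theorem2} to transfer those facts to the quasi-eigenvalue framework. The paper's proof runs in the opposite direction: it starts from the quasi-eigenpair $(u_{S_+},v_{S_+})$ whose existence was already produced variationally in Theorem~\ref{theorem1}, observes that $\overline{\lambda}_{S_+}(A)=\inf_{v\in S_+^o}\lambda(u_{S_+},v)$ forces $Au_{S_+}-\overline{\lambda}_{S_+}(A)u_{S_+}\geq 0$, and then deduces $u_{S_+}\in S_+^o$ from irreducibility of $A-\overline{\lambda}_{S_+}(A)I\in M^{isc}_{n\times n}(\mathbb{R})$ --- a purely elementary sign-and-irreducibility argument, not an invocation of Birkhoff--Varga. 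From interiority of $u_{S_+}$ it extracts the stationarity $\partial_u\lambda=0$, hence $A^Tv_{S_+}=\lambda_{S_+}(A)v_{S_+}$, and a second application of the same irreducibility lemma gives $v_{S_+}\in S_+^o$ and $\partial_v\lambda=0$. This direct route is the whole point: as the remark following Theorem~\ref{theorem3} emphasizes, the pair (Theorem~\ref{theorem1}, Theorem~\ref{theorem3}) is intended to \emph{yield} the Perron--Frobenius and Birkhoff--Varga theorems as corollaries, so proving Theorem~\ref{theorem3} by citing Birkhoff--Varga would undercut the paper's main contribution. (To be fair, the paper itself does lean on Perron--Frobenius for the \emph{uniqueness} of the saddle point, exactly as you do; the independent part of its argument is the existence/positivity/saddle-point structure.) One small imprecision in your write-up: in the case $a_{ij}\leq 0$, the matrix $-A$ is essentially positive but need not be nonnegative, so the object supplied by Birkhoff--Varga is the quasi-Perron root $\lambda_{S_+}^*(-A)=\max_j\operatorname{Re}\lambda_j(-A)$ rather than a ``Perron root'' in the spectral-radius sense; the argument still goes through because Corollary~\ref{cor1}(ii) needs only a real eigenvalue with positive right and left eigenvectors, but the terminology should be corrected.
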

		\begin{rem} The proof of  Theorems \ref{theorem1}, \ref{theorem3} provides a new approach to the proof of the Perron-Frobenius and Birkhoff-Varga Theorems. Indeed, from these results it follows that  if $A \in M_{n\times n}^{isc}(\mathbb{R})$  is a nonnegative matrix, then \eqref{eq:Noneg},  is fulfilled, while \eqref{eq:Re} holds true if $A \in M_{n\times n}^{isc}(\mathbb{R})$ is a  matrix with nonnegative elements off the diagonal.  Thus,  Theorem \ref{theorem3} and Corollary \ref{cor2} yield   the Perron–Frobenius and Birkhoff–Varga Theorems. Moreover, the theorem is supplemented with new properties of the essentially positive matrix such as continuity \eqref{eq:ConC}.
		\end{rem}
		\begin{rem} The  matrices with sign-constant elements off the diagonal  arise in a multitude of scientific disciplines and practical applications, see, e.g., \cite{Berman, Varga}. 
		\end{rem}

	 Note that inequalities \eqref{contOC}, \eqref{contOCO}, and \eqref{eq:ConC}  are related to Weyl’s inequality  \cite{parlet} for the eigenvalues of symmetric matrices, moreover they generalize it to arbitrary matrices. Furthermore,  \eqref{eq:ConC} means that the Perron root of irreducible nonnegative matrices is stable under arbitrary perturbations, which may be thought of as a generalization  of  Weyl's assertion on the stability of the spectrum of symmetric matrices under perturbations on the manifold of symmetric matrices \cite{parlet}.
		
			Clearly, $\lambda_{S_+}(A)$ in \eqref{eq:ConC} is the Peron root  if $A$ is an irreducible non-negative matrix.   
		But  $A+D$    may not even be the irreducible non-negative matrices, and thus, $\overline{\lambda}_{S_+}(A+D)$, $\underline{\lambda}_{S_+}(A+D)$ might be not the eigenvalue and  Peron root of $A+D$.
		In this regard, it appears that \eqref{eq:ConC} provides a definite  answer to the question posed by Meyer in \cite{Meyer}  about the continuity of the Peron root depending on the arbitrary matrices.
		
	Define  $d(C, C'):=\|I-U\|_M$ for $C,C'\in K(\mathbb{R}^n)$, $U \in O(n)$ such that $C'=UC$.  It is easily seen that  $d(C, C')$ is a well-defined metric on  $K(\mathbb{R}^n)$. The topology induced on $K(\mathbb{R}^n)$ due to this metric will be denoted by $\tau(O(n))$.

\begin{cor}\label{cor4}
		Assume that $A \in M_{n\times n}(\mathbb{R})$, $C \in K(\mathbb{R}^n)$  and $u_C(A), v_C(A) \in C^o$. Then the map $ \lambda_{(\cdot)}(A)$ is continuous on $(K(\mathbb{R}^n), \tau(O(n)))$ at $C$. Moreover, if $C' \in K(\mathbb{R}^n)$ such that $d(C, C')$ is sufficiently small, then 
		\begin{equation}\label{eq:contCone}
		\max\{|\overline{\lambda}_{C'}(A)- \lambda_{C}(A)|,~|\underline{\lambda}_{C'}(A)- \lambda_{C}(A)|\}	 \leq c_3(A,C)d(C, C'), 
		\end{equation}
		where $c_3(A,C)$ does not depends on $C'$.
	\end{cor}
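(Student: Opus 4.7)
The plan is to combine the orthogonal invariance relations \eqref{inv}--\eqref{inv2} with the stability estimates \eqref{contOC}--\eqref{contOCO} provided by Theorem \ref{theorem2}, thereby reducing a perturbation of the cone to a perturbation of the matrix.

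Given $C' \in K(\mathbb{R}^n)$ with $C' = UC$ for some $U \in O(n)$ realizing $d(C,C') = \|I - U\|_M$, I would first apply the change of variables $u = U\tilde u$, $v = U\tilde v$ in the defining suprema/infima of $\overline{\lambda}_{C'}(A)$ and $\underline{\lambda}_{C'}(A)$. Because $U$ is orthogonal, $\langle Au,v\rangle = \langle U^{T}AU\tilde u,\tilde v\rangle$ and $\langle u,v\rangle = \langle \tilde u,\tilde v\rangle$, so \eqref{inv}--\eqref{inv2} give
\begin{equation*}
\overline{\lambda}_{C'}(A) = \overline{\lambda}_{C}(U^{T}AU), \qquad \underline{\lambda}_{C'}(A) = \underline{\lambda}_{C}(U^{T}AU).
\end{equation*}

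Next I would set $D := U^{T}AU - A$, so that $U^{T}AU = A + D$, and estimate $\|D\|_M$ by inserting $\pm U^{T}A$:
\begin{equation*}
D = U^{T}A(U-I) + (U^{T}-I)A,\qquad \|D\|_M \leq 2\|A\|_M\,\|I-U\|_M = 2\|A\|_M\, d(C,C'),
\end{equation*}
using $\|U\|_M = \|U^{T}\|_M = 1$ and $\|U^{T}-I\|_M = \|U-I\|_M$.

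With the hypothesis $u_{C}(A),v_{C}(A) \in C^{o}$ in force, the estimates \eqref{contOC}--\eqref{contOCO} of Theorem \ref{theorem2} apply to the perturbation $D$ and yield
\begin{equation*}
|\overline{\lambda}_{C}(A+D) - \lambda_{C}(A)| \leq c_{0}(A,C)\|D\|_M, \qquad |\underline{\lambda}_{C}(A+D) - \lambda_{C}(A)| \leq c_{0}(A,C)\|D\|_M.
\end{equation*}
Combining these with the two displays above gives \eqref{eq:contCone} with $c_{3}(A,C) := 2c_{0}(A,C)\|A\|_M$, a constant independent of $C'$; continuity of $\lambda_{(\cdot)}(A)$ at $C$ in the topology $\tau(O(n))$ is then immediate.

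I do not expect a serious obstacle here: the main conceptual point is simply that the metric $d$ was designed precisely so that a cone-perturbation translates, via conjugation, into a controllable matrix-perturbation of size $O(d(C,C'))$, at which stage Theorem \ref{theorem2} does all the work. The only mild subtlety is handling the choice of $U$ realizing $C' = UC$ (which may be non-unique); taking the $U$ attaining $d(C,C')$ keeps the bound sharp, and the smallness hypothesis on $d(C,C')$ is not actually used in the estimate — it only serves to guarantee that such a $U$ can be selected close to the identity.
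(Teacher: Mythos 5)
Your proof is correct and, in one place, genuinely better than the paper's. Both arguments begin identically: reduce the cone perturbation to a matrix perturbation via $\overline{\lambda}_{C'}(A)=\overline{\lambda}_{C}(U^{T}AU)$, $\underline{\lambda}_{C'}(A)=\underline{\lambda}_{C}(U^{T}AU)$, and then invoke \eqref{contOC}--\eqref{contOCO} with $D=U^{T}AU-A$. They diverge at the estimate for $\|D\|_M$. The paper writes $U=\exp(G)$ (invoking Stone's theorem), expands $U=I+G+\bar o(\|G\|)$, obtains $D=[A,G]+\bar o(\|G\|)$, and bounds $\|D\|_M\le c_3(A,C)\|G\|$ only for $\|G\|$ small — this is where the ``$d(C,C')$ sufficiently small'' hypothesis enters, and the constant $c_3(A,C)$ is left implicit. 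Your algebraic decomposition
\[
D=U^{T}AU-A=U^{T}A(U-I)+(U^{T}-I)A,\qquad \|D\|_M\le 2\|A\|_M\,\|U-I\|_M,
\]
using $\|U^{T}\|_M=1$ and $\|U^{T}-I\|_M=\|(U-I)^{T}\|_M=\|U-I\|_M$, is exact, elementary, yields the explicit constant $c_3(A,C)=2c_0(A,C)\|A\|_M$, and — as you correctly observe — does not require $d(C,C')$ to be small at all. It also sidesteps a wrinkle in the paper's argument: for real $U\in O(n)$ the generator $G$ with $U=\exp(G)$ is skew-symmetric, not self-adjoint as the paper states, and the exponential map onto $O(n)$ only reaches the $\det U=1$ component, so the paper's parametrization tacitly restricts $U$. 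Your route avoids all of this. One small caution worth making explicit in a final write-up: the infimum defining $d(C,C')$ over $\{U\in O(n):UC=C'\}$ is attained because this set is closed in the compact group $O(n)$, which justifies your choice of a minimizing $U$.
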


		 Recall that a matrix $A$ is normal if $A^TA=A A^T$. It is well known that for any real normal matrix $A$ there exists  a set of invariant subspaces $V_j$, $1\leq j \leq n-l$   of $A$ with $l \in [0, n/2]$  such that $\operatorname{dim} V_j=2$,  $\operatorname{Im}\lambda(A)|_{V_j}\neq 0$, $j=1,\ldots l$, and  $\operatorname{dim} V_j=1$, $ \operatorname{Im}\lambda(A)|_{V_j}= 0$, $j=2l+1,\ldots,n$. Here $\lambda(A)|_{V_j}$ denotes the eigenvalue of the operator $A$ restricted on invariant subspaces $V_j$.
		 
		\begin{thm} \label{theorem44} 
			Assume that $A\in M_{n\times n}(\mathbb{R})$ and  $C\in K(\mathbb{R}^n)$.
		\begin{description}
			\item[$(1^o)$ ]	
			\begin{equation}\label{eq:norEst12}
				\min\{\lambda_j(\frac{A+A^T}{2})\mid~j=1,\ldots,n\}\leq  ~ \underline{\lambda}_C(A)\leq \overline{\lambda}_C(A)\leq 	\max\{\lambda_j(\frac{A+A^T}{2})\mid~j=1,\ldots,n\}.
					\end{equation}
				\item[ $(2^o)$] 	If $A$ is a normal matrix,  then 
			\begin{equation*}
				\min_{1\leq  j\leq n}\operatorname{Re}\lambda_j(A) \leq ~ \underline{\lambda}_C(A)\leq \overline{\lambda}_C(A) ~\leq 
				\max_{1\leq  j\leq n}\operatorname{Re}\lambda_j(A), ~~\forall C \in K(\mathbb{R}^n).
			\end{equation*}

		\end{description}
	\end{thm}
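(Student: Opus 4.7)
My plan is to derive both statements from the existence of right and left quasi-eigenvectors $u_C(A), v_C(A) \in C$ supplied by Theorem \ref{theorem1}, together with the elementary observation that the ``diagonal'' evaluation of the extended Rayleigh quotient reduces to a standard symmetric Rayleigh quotient: for every $w \in \mathbb{R}^n \setminus 0$,
\begin{equation*}
\lambda(w,w)=\frac{\langle A w,w\rangle}{\|w\|^{2}}=\frac{\langle \frac{A+A^T}{2} w,w\rangle}{\|w\|^{2}},
\end{equation*}
so the classical Rayleigh--Ritz estimate yields $\mu_{\min}\leq \lambda(w,w)\leq \mu_{\max}$, where $\mu_{\min}$ and $\mu_{\max}$ denote the least and largest eigenvalues of the symmetric matrix $(A+A^T)/2$.

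For the upper inequality in $(1^o)$ I would start from the identity $\overline{\lambda}_C(A)=\inf_{v\in C^o}\lambda(u_C(A),v)$ furnished by Theorem \ref{theorem1} and show that $\inf_{v\in C^o}\lambda(u_C(A),v)\leq \lambda(u_C(A),u_C(A))$. Since $\overline{C}$ is a solid convex cone, $C^o$ is dense in $\overline{C}$, so one can pick a sequence $v_k\in C^o$ with $v_k\to u_C(A)$; as $\langle u_C(A),v_k\rangle\to \|u_C(A)\|^2>0$, the ratio $\lambda(u_C(A),v_k)\to \lambda(u_C(A),u_C(A))$, giving the claimed estimate. Combining with the Rayleigh--Ritz bound produces $\overline{\lambda}_C(A)\leq \mu_{\max}$. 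The lower inequality is symmetric: from $\underline{\lambda}_C(A)=\sup_{u\in C^o}\lambda(u,v_C(A))$ one obtains $\underline{\lambda}_C(A)\geq \lambda(v_C(A),v_C(A))\geq \mu_{\min}$ by the same approximation argument applied to $v_C(A)$. The middle inequality $\underline{\lambda}_C(A)\leq \overline{\lambda}_C(A)$ is the one noted immediately after the statement of Theorem \ref{theorem1}.

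Part $(2^o)$ would reduce to $(1^o)$ by identifying the spectrum of $(A+A^T)/2$ when $A$ is real normal. The real normal form orthogonally conjugates $A$ into a block-diagonal matrix whose $1\times 1$ blocks are the real eigenvalues of $A$ and whose $2\times 2$ blocks have the shape $\left(\begin{smallmatrix} a_j & -b_j \\ b_j & a_j \end{smallmatrix}\right)$, encoding a conjugate pair $a_j\pm i b_j$. A direct computation shows that the symmetric part of each such $2\times 2$ block equals $a_j I_2$, hence the spectrum of $(A+A^T)/2$ consists precisely of the numbers $\operatorname{Re}\lambda_j(A)$, counted with multiplicity. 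Inserting this identification into \eqref{eq:norEst12} yields $(2^o)$.

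The delicate step is the passage from the pointwise bound $\lambda(w,w)\leq \mu_{\max}$ to the quasi-eigenvalue bound on $\overline{\lambda}_C(A)$ when the quasi-eigenvector $u_C(A)$ lies on the boundary $\partial C$, since then $v=u_C(A)$ is not itself admissible in the infimum over $C^o$. The interior-approximation argument above overcomes this using only solidity of $\overline{C}$ and the positivity of $\langle u_C(A),v_k\rangle$ along the approximating sequence; once this passage is secured, the remainder of the proof is bookkeeping.
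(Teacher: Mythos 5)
Your proof is correct, and it reaches the conclusion along a different route from the paper. The paper proves $(2^o)$ first, using the field-of-values theorem for normal matrices (namely that the numerical range $J$ of a normal matrix coincides with the convex hull of its eigenvalues), and then derives $(1^o)$ by noting that $\langle Au,u\rangle = \langle \tfrac{A+A^T}{2}u,u\rangle$ and applying the $(2^o)$-type argument to the symmetric (hence normal) matrix $\tfrac{A+A^T}{2}$. You invert this order: you establish $(1^o)$ directly from the classical Rayleigh--Ritz bound on the symmetric part, and then obtain $(2^o)$ by a spectral identification -- via the real normal form -- showing that the eigenvalues of $\tfrac{A+A^T}{2}$ are exactly $\operatorname{Re}\lambda_j(A)$ when $A$ is normal. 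Your route avoids citing the field-of-values theorem and is in that sense more elementary; it also makes explicit the interior-approximation step needed to pass from $\inf_{v\in C^o}\lambda(u_C,v)$ to $\lambda(u_C,u_C)$ when $u_C\in\partial C$, a step the paper uses silently in writing $\overline{\lambda}_C(A)\leq \sup_{u\in\mathbb{R}^n\setminus 0}\langle Au,u\rangle/\langle u,u\rangle$. A smaller structural difference: for the lower estimate the paper invokes the minimax equality to write $\overline{\lambda}_C(A)=\inf_{v\in C^o}\sup_{u\in C}\lambda(u,v)\geq \inf_v\lambda(v,v)$, while you bound $\underline{\lambda}_C(A)$ from below via the left quasi-eigenvector $v_C(A)$ and then attach the already-noted inequality $\underline{\lambda}_C(A)\leq\overline{\lambda}_C(A)$; both deliver the same chain, but the paper actually proves the stronger two-sided bounds on each of $\overline{\lambda}_C$ and $\underline{\lambda}_C$ individually, whereas yours proves exactly the three inequalities stated.
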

		Note that  the eigenvalues of the symmetric matrix $(A+A^T)/2$ are real.
		
	\begin{cor}
		If $A$ is a skew-symmetric matrix, then $\underline{\lambda}_C(A)= \overline{\lambda}_C(A)=0$ for any $C \in K(\mathbb{R}^n)$.
	\end{cor}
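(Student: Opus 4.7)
The plan is to deduce this as an immediate consequence of part $(1^o)$ of Theorem \ref{theorem44}. First I would observe that skew-symmetry of $A$ means $A^{T}=-A$, so the symmetric part vanishes identically:
\begin{equation*}
\frac{A+A^{T}}{2}=0.
\end{equation*}
The zero matrix has all $n$ eigenvalues equal to $0$, so both the minimum and the maximum appearing in the bounds \eqref{eq:norEst12} collapse to $0$.

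Next I would apply Theorem \ref{theorem44}$(1^o)$ directly: for the given $C\in K(\mathbb{R}^n)$,
\begin{equation*}
0=\min_{1\leq j\leq n}\lambda_{j}\!\left(\tfrac{A+A^{T}}{2}\right)\leq \underline{\lambda}_{C}(A)\leq \overline{\lambda}_{C}(A)\leq \max_{1\leq j\leq n}\lambda_{j}\!\left(\tfrac{A+A^{T}}{2}\right)=0,
\end{equation*}
which forces the two quasi-eigenvalues to coincide and to equal $0$. Since $C\in K(\mathbb{R}^n)$ was arbitrary, the conclusion follows.

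There is essentially no obstacle here — the whole content of the corollary is carried by Theorem \ref{theorem44}$(1^o)$, and the only ingredient is the elementary algebraic identity $A+A^{T}=0$. The one small point worth flagging in writing is that the bounds in \eqref{eq:norEst12} do not require $A$ itself to be normal (the normality hypothesis is only used in $(2^o)$), so $(1^o)$ indeed applies to every skew-symmetric $A$ and every $C\in K(\mathbb{R}^n)$ with no further assumption.
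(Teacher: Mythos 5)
Your proof is correct and is essentially the argument the paper intends: the corollary is stated immediately after Theorem \ref{theorem44} precisely because it drops out of the estimate \eqref{eq:norEst12} once one notes $A+A^T=0$ for skew-symmetric $A$. The paper gives no separate proof, so there is nothing to diverge from; your reading is the natural one.

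One small remark on your final paragraph: your observation that $(1^o)$ applies to general $A$ (normality is only assumed in $(2^o)$) is accurate and worth making explicit, but it is also harmless to note that skew-symmetric matrices \emph{are} normal (since $A^TA=-A^2=AA^T$), so $(2^o)$ would give the same conclusion via $\operatorname{Re}\lambda_j(A)=0$ for all $j$. Either route is a one-line deduction from Theorem \ref{theorem44}; your choice of $(1^o)$ is the slightly more elementary of the two because it does not invoke the (well-known) fact that skew-symmetric matrices have purely imaginary spectrum.
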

	
	\begin{thm} \label{theorem4} Assume that $A$ is a real normal matrix,  $C\in K_O(\mathbb{R}^n)$.  Then there holds the following:
	\begin{description}
		\item[ $(1^o)$] If $V_i\cap C^o = \emptyset$, $\forall i=1,\ldots n-l$, then
		\begin{equation*}
			 \overline{\lambda}_{C}(A)=	\max_{1\leq  j\leq n}\operatorname{Re}\lambda_j(A),~~ \underline{\lambda}_{C}(A)=\min_{1\leq  j\leq n}\operatorname{Re}\lambda_j(A).
		\end{equation*}	
		\item[$(2^o)$ ]  If  $\exists i\in \{1,\ldots,n\}$ such that $V_i\cap C^o \neq \emptyset$, then 
		$$
		\overline{\lambda}_C(A)=\underline{\lambda}_C(A)=\operatorname{Re}\lambda_i(A).
		$$
		 \end{description}
	\end{thm}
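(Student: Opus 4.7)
The plan is to reduce to $C=S_+$ via the orthogonal invariance \eqref{inv}-\eqref{inv2} and then exploit the structural decomposition of a real normal matrix on its orthogonal invariant subspaces. Pick $U\in O(n)$ with $C=US_+$ and set $\tilde A:=U^TAU$ (still normal, with invariant subspaces $\tilde V_j=U^TV_j$); since both quasi-eigenvalues are preserved, it suffices to prove the claims for $\tilde A$ with $C=S_+$. The key algebraic fact is that the symmetric part $B:=(\tilde A+\tilde A^T)/2$ commutes with the skew part $S:=(\tilde A-\tilde A^T)/2$, shares the subspaces $\tilde V_j$, and acts on each $\tilde V_j$ as $a_jI$ with $a_j=\operatorname{Re}\lambda_j(A)$; hence $\langle\tilde Aw,w\rangle=a_j\|w\|^2$ for every $w\in\tilde V_j$, i.e., $\lambda(w,w)=a_j$.

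For $(2^o)$, let $w\in\tilde V_i\cap S_+^o$. If $\dim\tilde V_i=1$, then $w$ is (after normalization and a possible sign flip) simultaneously a right and left eigenvector of $\tilde A$ by normality, with real eigenvalue $\lambda_i(A)=a_i$ lying in $S_+^o$, so Corollary \ref{cor1}$(ii)$ immediately yields $\overline{\lambda}_{S_+}(\tilde A)=\underline{\lambda}_{S_+}(\tilde A)=a_i$. If $\dim\tilde V_i=2$, use the dual characterization $\overline{\lambda}_{S_+}(\tilde A)=\inf_{v\in S_+^o}\sup_{u\in S_+}\lambda(u,v)$ together with the substitution $u_k=w_ks_k$ (valid since $w\in S_+^o$), yielding
\begin{equation*}
\sup_{u\in S_+}\lambda(u,w)=a_i-\min_{k}\frac{(Sw)_k}{w_k}.
\end{equation*}
Since $\langle Sw,w\rangle=0$ with $w_k>0$ forces the minimum to be non-positive, one picks a sequence $w_n\in\tilde V_i\cap S_+^o$ tending to an appropriate boundary ray of the two-dimensional wedge $\tilde V_i\cap\overline{S_+}$ (chosen according to the sign of $b_i$ and the embedding of $\tilde V_i$ in $\mathbb{R}^n$), along which $\min_k(Sw_n)_k/(w_n)_k\to 0$. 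This gives $\overline{\lambda}_{S_+}(\tilde A)\leq a_i$; an analogous primal computation yields $\underline{\lambda}_{S_+}(\tilde A)\geq a_i$, and the inequality $\underline{\lambda}\leq\overline{\lambda}$ from \eqref{eq:mimaPrin} closes the case.

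For $(1^o)$, Theorem \ref{theorem44}$(2^o)$ provides the outer bounds $\min_ja_j\leq\underline{\lambda}_{S_+}(\tilde A)\leq\overline{\lambda}_{S_+}(\tilde A)\leq\max_ja_j$. For the matching equalities, apply Corollary \ref{cortheorem4}$(a)$ to the symmetric matrix $B$: under the hypothesis of $(1^o)$, no $\tilde V_j$ meets $S_+^o$, so Corollary \ref{cortheorem4}$(a)$ yields $\overline{\lambda}_{S_+}(B)=\max_ja_j$ and $\underline{\lambda}_{S_+}(B)=\min_ja_j$. The identity $\lambda_{\tilde A}(u,u)=\lambda_B(u,u)$ together with a boundary-limit argument parallel to that of $(2^o)$ (applied at a vector lying in a $\tilde V_{j^*}$ that achieves the extremum) transfers these equalities from $B$ to $\tilde A$. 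The most delicate step will be the limit argument of the $(2^o)$ sub-case B: one must verify that any two-dimensional invariant $\tilde V_i$ meeting $\overline{S_+}$ in a wedge admits a boundary ray along which the skew residual $\min_k(Sw)_k/w_k$ vanishes, which relies on combining the rotational action of $S$ on $\tilde V_i$ with the polyhedral facial structure of $S_+$.
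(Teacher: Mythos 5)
Your route differs from the paper's: you normalize the \emph{cone} to $S_+$ and then work with the symmetric/skew decomposition $\tilde A=B+S$, whereas the paper normalizes the \emph{matrix} to its canonical block form $\mathcal{O}_A$ and then uses the quasi-eigenvector characterization inequality $\langle(A^T-\underline\lambda_C I)v_C,w\rangle\le0$ evaluated at a coordinate vector $w=e_1\in C^o$. Your identity $\sup_{u\in S_+}\lambda(u,w)=a_i-\min_k (Sw)_k/w_k$ for $w\in\tilde V_i\cap S_+^o$ is correct, and the $\dim\tilde V_i=1$ subcase via Corollary~\ref{cor1}$(ii)$ is fine. The real content is concentrated exactly where you flag it.

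That flagged step, however, is not merely ``delicate'': the claim that the 2D wedge $\tilde V_i\cap\overline{S_+}$ always has a boundary ray along which $\min_k(Sw)_k/w_k\to 0$ appears to be \emph{false}, and there is no obvious repair. The mechanism is this: write $w(\phi)=\cos\phi\,f_1+\sin\phi\,f_2$ in an orthonormal basis of $\tilde V_i$, so that $Sw(\phi)=b_i\,w(\phi+\pi/2)$. As $\phi$ approaches a boundary angle $\phi^*$ of the wedge, some coordinate $w_k(\phi)\to 0$; but the corresponding $(Sw(\phi))_k\to b_i\,(w(\phi^*+\pi/2))_k$, which in general is a nonzero number of the \emph{wrong sign}, so $\min_k(Sw)_k/w_k$ tends to $-\infty$ or to a strictly negative finite limit, not to $0$. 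Concretely, take $n=3$, $f_1=(1,1,1)/\sqrt3$, $f_2=(1,-1,0)/\sqrt2$, $f_3=(1,1,-2)/\sqrt6$, and $A$ normal with $A|_{\operatorname{span}(f_1,f_2)}=Q(1,\pi/2)$ (so $\operatorname{Re}\lambda_1=0$, $b_1=1$) and $A f_3=10 f_3$. Here $V_1=\operatorname{span}(f_1,f_2)$ meets $S_+^o$ at $f_1$, so the hypothesis of $(2^o)$ holds with $i=1$; yet the wedge $V_1\cap\overline{S_+}$ has opening angle $\arccos(1/5)<\pi/2$, and a direct computation shows
\[
\overline\lambda_{S_+}(A)=\underline\lambda_{S_+}(A)=\inf_{v\in S_+^o}\max_k\frac{(A^Tv)_k}{v_k}=\frac{25-\sqrt{619}}{6}\approx 0.0200\;\neq\;0=\operatorname{Re}\lambda_1(A).
\]
(The inf is attained in the limit $v_1\to0^+$, $v_3/v_2\to\tfrac{-5+\sqrt{619}/3}{2(10/3+1/\sqrt6)}$; on no boundary ray of the wedge does $\min_k(Sw)_k/w_k$ vanish.) So the step you singled out cannot be completed as written, and in fact the statement of $(2^o)$ seems to require the stronger hypothesis that the wedge $V_i\cap\overline{C}$ has opening angle exactly $\pi/2$ — equivalently, that after a rotation inside $V_i$ both coordinate axes of the $Q$-block can be placed in $\overline C$ (which is what makes the paper's argument about the signs of $v_1,v_2$ go through, and is automatic only when $V_i$ is a coordinate $2$-plane of the orthant). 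The same obstruction affects your transfer argument in $(1^o)$, and there is an additional minor issue there: Corollary~\ref{cortheorem4}$(a)$ requires that \emph{no eigenvector} of the symmetric part $B$ has span meeting $C^o$, which is not implied by ``no $V_j$ meets $C^o$'' when $B$ has repeated eigenvalues across distinct $V_j$'s.
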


\begin{rem}\label{rem: Fin}
The Birkhoff-Varga formula \eqref{BV} and its generalizations \eqref{eq:DefQQ}, \eqref{eq:DefQQ2}, being variational, allows us to apply variational methods to numerically finding quasi-eigenpairs of matrices, such as gradient-based algorithms and other non-iterative methods, see, e.g.,  \cite{Bagirov, IvanIlya, IlIvan1, Salazar}. Thus, in particular cases, such as irreducible nonnegative matrices, variational formulas \eqref{eq:DefQQ}, \eqref{eq:DefQQ2} can produce algorithms for finding eigenpairs based on new principles, which may have certain advantages (see, e.g., \cite{ IvanIlya, IlIvan1}) over the commonly used ones, see, e.g, \cite{golub, Kuznet} and references therein.
\end{rem}

\begin{rem} 
	We believe that the generalization of the Birkhoff-Varga formula  \eqref{IG}   may be developed for other spectral problems in matrix theory, such as nonlinear spectral problems \cite{Mehrmann, Voss}, spectral problems for multilinear forms \cite{Chang, FriedlandHan} etc. \cite{ Weinberger}.
\end{rem}

	\begin{rem}
		Identifying the largest real part of eigenvalues of matrices with non-zero imaginary parts is important in finding solutions to various problems, see, e.g., \cite{Arnold, chen, Hassard, Hale, Krasnoselskii, Varga}. For instance, verifying that $ \displaystyle{\max_{1\leq  j\leq n}\operatorname{Re}\lambda_j(A)\geq 0}$ is crucial in the detection of the Hopf bifurcation, see, e.g., \cite{chen, Hassard, Hale}. Meanwhile, existing methods of verifying this condition, such as the Routh-Hurwitz criterion, see, e.g., \cite{Hale}, are often difficult to apply, especially for large matrices  see, e.g., \cite{chen}. An alternative strategy is provided by Corollary \ref{cor2}, Theorems \ref{theorem44}, \ref{theorem4}  for addressing this problem, and it can be anticipated that its further development will produce the desired outcome. 
	\end{rem}

\section{ Proof of Theorem \ref{theorem1}}
A function 
$f:X\to \mathbb{R}$  defined on a convex subset 
$X$ of a real vector space is \textit{quasiconvex} if for all 
$x,y\in X$ and 
$\alpha \in [0,1]$ we have
$$
f(\alpha x+(1-\alpha )y)\leq \max {\big \{}f(x),f(y){\big \}}.
$$
A function whose negative is quasiconvex is called \textit{quasiconcave}. A function which is quasiconvex and quasiconcave said to be \textit{quasilinear} (see \cite{Sion}).

Our method is based on  Sion's minimax theorem \cite{Sion}:

\noindent
{\bf Theorem} (\textit{Sion})
	\textit{Let $X$ be a compact convex subset of a linear topological space and 
	$Y$ a convex subset of a linear topological space. If 
	$f$ is a real-valued function on 
	$X\times Y$  with}
	\begin{itemize}
		\item \textit{$f(x,\cdot)$ upper semicontinuous and quasi-concave on 
		$Y$, $\forall x\in X$, and}
		\item $f(\cdot,y)$\textit{ lower semicontinuous and quasi-convex on }
		$X$, $\forall y\in Y$,
	\end{itemize}
	\textit{then the minimax principle is satisfied}
	$$
	\sup_{y\in Y}\min_{x\in X}f(x,y)=\min_{x\in X}\sup_{y\in Y} f(x,y).
	$$

Let us prove	
	\begin{prop}\label{propSion} Let $A \in M_{n\times n}(\mathbb{R})$, $C \in K(\mathbb{R}^n)$. 
		Then the functions  $\lambda(\cdot, v)$, $\forall v \in C$ and $\lambda(u,\cdot)$,	$\forall u \in C$ are  quasilinear on $C^o$.
	\end{prop}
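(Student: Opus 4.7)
The plan is to prove quasilinearity directly from the characterization that $f$ is quasilinear on a convex set iff every sublevel set $\{f\leq c\}$ and every superlevel set $\{f\geq c\}$ is convex. Fix $v\in C$ and consider $u\mapsto \lambda(u,v)=\langle Au,v\rangle/\langle u,v\rangle$ on $C^o$.

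First I would verify that the quotient is well-defined and has constant sign in the denominator on $C^o$. Since $C$ is self-dual, $\overline{C}^*=\overline{C}$, so for any $v\in C\subset\overline{C}^*$ and any $u\in C^o$ one has $\langle u,v\rangle>0$ (a standard property of the interior of a self-dual solid cone: if $\langle u,v\rangle=0$ for some $v\in \overline{C}\setminus 0$, then $u$ lies on a supporting hyperplane of $\overline{C}$ and cannot be interior). Hence $\lambda(u,v)$ is a smooth real-valued function of $u\in C^o$.

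Next, for any $c\in\mathbb{R}$,
\begin{equation*}
\{u\in C^o:\lambda(u,v)\leq c\}=\{u\in C^o:\langle (A-cI)u,\,v\rangle\leq 0\},
\end{equation*}
because the denominator $\langle u,v\rangle$ is strictly positive on $C^o$ and may be cleared without reversing the inequality. The right-hand side is the intersection of the convex set $C^o$ with the closed half-space $\{u:\langle u,(A^T-cI)v\rangle\leq 0\}$, hence is convex. The very same computation with the reversed inequality shows that $\{u\in C^o:\lambda(u,v)\geq c\}$ is convex as well. Therefore $\lambda(\cdot,v)$ is simultaneously quasiconvex and quasiconcave on $C^o$, i.e.\ quasilinear. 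The argument for $\lambda(u,\cdot)$ with $u\in C$ fixed is symmetric: using $\langle Au,v\rangle=\langle A^Tv,u\rangle$ one obtains the level sets $\{v\in C^o:\langle (A^T-cI)v,u\rangle\lessgtr 0\}$, again intersections of $C^o$ with half-spaces.

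I do not expect any genuine obstacle here; the only point that requires attention is the strict positivity $\langle u,v\rangle>0$ on $C^o\times C$, which relies on self-duality of $C$ and the characterization of interior points of a self-dual solid cone. Once that is recorded, quasilinearity falls out from the half-space form of the level sets and no further analytic work is needed.
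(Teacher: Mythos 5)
Your proof is correct, and it takes a genuinely different route from the paper's. You establish quasilinearity through the level-set characterization: after recording that $\langle u,v\rangle > 0$ for $(u,v)\in C^o\times C$ (a consequence of self-duality of the solid cone), you observe that every sublevel set of $\lambda(\cdot,v)$ on $C^o$ has the form
\begin{equation*}
\{u\in C^o:\lambda(u,v)\leq c\}=\{u\in C^o:\langle (A-cI)u,v\rangle\leq 0\},
\end{equation*}
the intersection of the convex set $C^o$ with a closed half-space, hence convex; the same goes for superlevel sets, and quasilinearity follows. The paper instead parametrises along a segment, setting $f(\alpha)=\lambda(\alpha u+(1-\alpha)w,v)$ and computing $f'(\alpha)$, whose numerator $\langle Aw,v\rangle\langle u,v\rangle-\langle Au,v\rangle\langle w,v\rangle$ is independent of $\alpha$; thus $f$ is monotone or constant on $[0,1]$, giving $\min\{f(0),f(1)\}\leq f(\alpha)\leq\max\{f(0),f(1)\}$. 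Both arguments exploit the same structural fact — that $\lambda$ is a ratio of two affine forms — but yours works directly from the definition and avoids the calculus; it is the more standard device for proving quasilinearity of linear-fractional functions. It also has the merit of making explicit the positivity of the denominator, which the paper invokes only implicitly via the earlier remark that $\lambda$ is ``well-defined'' on $C^o\times C$. One small bookkeeping note: the paper's proof actually fixes $v\in C^o$ and varies $u,w\in C$, proving quasilinearity on $C$ rather than on $C^o$ as the statement reads; your version matches the statement's wording more faithfully, and the two variants are interchangeable in what follows.
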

	\begin{proof} Let $v \in C^o$, $u,w \in C$. Consider
			\begin{align*}
			f(\alpha):=&\lambda(\alpha u+(1-\alpha)w,v)=\frac{\alpha\left\langle A u, v\right\rangle+(1-\alpha)\left\langle A w, v\right\rangle}{\left\langle \alpha  u+(1-\alpha) w ,v\right\rangle},~~\alpha \in [0,1].
	\end{align*}
	Calculate
	\begin{align*}
		f'(\alpha)=	\frac{\left\langle A w, v\right\rangle \left\langle  u, v\right\rangle-\left\langle A u, v\right\rangle\left\langle w, v\right\rangle}{\left\langle \alpha  u+(1-\alpha) w ,v\right\rangle^2},~~\alpha \in (0,1).
	\end{align*}
	Hence if $\left\langle A w, v\right\rangle \left\langle  u, v\right\rangle-\left\langle A u, v\right\rangle\left\langle w, v\right\rangle \neq 0$, then $f'(\alpha)> 0$ or $f'(\alpha)< 0$,  $\forall \alpha \in (0,1)$, whereas
				if $\left\langle A w, v\right\rangle \left\langle  u, v\right\rangle-\left\langle A u, v\right\rangle\left\langle w, v\right\rangle = 0$, then $f'(\alpha)\equiv 0$, $\forall \alpha \in (0,1)$.
	This implies that
		$$
\min\{\lambda(u,v),\lambda(w,v)\}\leq \lambda(\alpha u+(1-\alpha)w,v)\leq \max\{\lambda(u,v),\lambda(w,v)\}, ~~\alpha \in (0,1).
$$
Hence, $\lambda(\cdot, v)$ is quasiconvex and quasiconcave function, and therefore, is a quasilinear on $C$.
	The proof for $\lambda(u, \cdot)$ is similar.
\end{proof}

We need also
\begin{prop}\label{prop2}
	$\lambda_{inf}(u):=\inf_{v \in C^o}\lambda(u,v)$ is a upper semicontinuous functional in $C$.
\end{prop}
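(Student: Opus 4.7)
The approach is the standard one: write $\lambda_{\inf}(u)$ as a pointwise infimum of a family of continuous functions of $u$, and then invoke the well-known fact that such an infimum is upper semicontinuous.

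First I would fix an arbitrary $v \in C^o$ and show that the map $u \mapsto \lambda(u,v) = \langle Au, v\rangle / \langle u, v\rangle$ is continuous on $C$. The numerator and denominator are linear (hence continuous) functionals of $u$, so the only thing to verify is that the denominator does not vanish on $C$. This is where self-duality of $\overline{C}$ enters: since $v \in C^o = \operatorname{int}\overline{C}$ and $\overline{C}^{*} = \overline{C}$, one has $\langle u, v\rangle > 0$ for every $u \in \overline{C}\setminus 0 = C$. Thus $\lambda(\cdot, v)$ is continuous on $C$ for each $v \in C^o$.

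Next I would pass to the infimum. The cleanest way is to check the closedness of superlevel sets: for any $c \in \mathbb{R}$,
\begin{equation*}
\{u \in C : \lambda_{\inf}(u) \geq c\} = \bigcap_{v \in C^o}\{u \in C : \lambda(u,v) \geq c\},
\end{equation*}
and each set on the right is (relatively) closed in $C$ by continuity of $\lambda(\cdot, v)$. Hence the intersection is closed in $C$, which is exactly upper semicontinuity of $\lambda_{\inf}$. Equivalently, for any sequence $u_k \to u_0$ in $C$ and any $v \in C^o$, we have $\lambda_{\inf}(u_k) \leq \lambda(u_k, v) \to \lambda(u_0, v)$, so $\limsup_k \lambda_{\inf}(u_k) \leq \lambda(u_0, v)$; taking the infimum over $v \in C^o$ gives $\limsup_k \lambda_{\inf}(u_k) \leq \lambda_{\inf}(u_0)$.

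There is no real obstacle here; the statement is essentially a soft consequence of two things — continuity of $\lambda(\cdot, v)$ on $C$ and the elementary fact that infima of continuous families are upper semicontinuous. The only point that deserves a sentence of justification is the positivity $\langle u, v\rangle > 0$ on $C \times C^o$, which rests on the self-duality assumption built into the definition of $K(\mathbb{R}^n)$.
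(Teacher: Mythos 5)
Your proof is correct and takes essentially the same route as the paper's: both reduce to the standard fact that a pointwise infimum of continuous (in $u$) functions is upper semicontinuous, after noting that $\langle u,v\rangle>0$ on $C\times C^o$ makes each $\lambda(\cdot,v)$ continuous. The one cosmetic difference is that the paper first replaces the infimum over $C^o$ by an infimum over a countable dense subset $(\phi_n)\subset C^o$ (which tacitly uses continuity of $\lambda(u,\cdot)$ in $v$) before writing the sublevel set as a countable union of open sets; you work with the full, possibly uncountable, family directly, which is both valid and slightly cleaner since the union of open sets is open regardless of cardinality and the dense-subset reduction is not needed.
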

\begin{proof} Let $(\phi_n)_{n=1}^\infty$ be a 
	countable dense set in $C^o$. Clearly, 
	$$
	\lambda_{inf}(u)=\inf_{n\ge 1}\lambda(u,\phi_n), ~~u \in C.
	$$
	Hence,	for any ${\displaystyle \tau \in \mathbb{R} }$, 
	$$
	\{u \in C \mid \lambda_{inf}(u)< \tau\}=\bigcup_{n=1}^{\infty}\{u \in C \mid \lambda(u,\phi_n)< \tau\}.
	$$
	It is easily seen that  $\lambda(\cdot,\phi_n)$ is a continuous function in $C$, for $n=1,\ldots $. Hence the set $\{u \in C \mid \lambda(u,\phi_n)< \tau\}$ is open in $C$, and therefore, $\{u \in C\mid \lambda_{inf}(u)< \tau\}$ is open for any ${\displaystyle \tau \in \mathbb{R} }$. This means that $\lambda_{inf}(u)$ is  a upper semicontinuous functional.
	
\end{proof}

{\it \underline{Proof of Theorem \ref{theorem1}}}

 Clearly,   $\sup_{u\in C}\inf_{v \in C^o}\lambda(u,v)>-\infty$ and 	$\inf_{v \in C^o}\sup_{u\in C}\lambda(u,v)<+\infty$. Since there holds $\sup_{u\in C}\inf_{v \in C^o}\lambda(u,v)\leq \inf_{v \in C^o}\sup_{u\in C}\lambda(u,v)$, we have $$
 -\infty<\sup_{u\in C}\inf_{v \in C^o}\lambda(u,v)\leq \inf_{v \in C^o}\sup_{u\in C}\lambda(u,v)<+\infty.
 $$

	Define $B_1=\{x \in C:~||x||\leq 1\}$. Clearly, $\partial \dot{B}_1:=\{x \in C:~||x||=1\}$ is a compact in $\mathbb{R}^n$.
	By  homogeneity of  $\lambda(\cdot,v)$ in $C$, we have
	$\overline{\lambda}_C(A):=\sup_{u\in C}\inf_{v \in C^o}\lambda(u,v)=\sup_{u\in \partial \dot{B}_1}\inf_{v \in C^o}\lambda(u,v)$.   By Proposition \ref{prop2},
	$\lambda_{inf}(u):=\inf_{v \in C^o}\lambda(u,v)$ is  upper semicontinuous on $ B_1$,  and therefore, there exists $u_{C} \in \partial \dot{B}_1$ such that $\overline{\lambda}_{C}(A)=\inf_{v \in C^o}\lambda(u_{C},v)=\sup_{u\in C}\inf_{v \in C^o}\lambda(u,v)$.

It is easily found  a sequence  of compact convex subsets $ B_1(\epsilon) \subset B_1$, $\epsilon\in (0,1)$ such that $  B_1(\epsilon) \subset B_1(\epsilon')$, $\forall \epsilon\geq \epsilon'$ and $\cup_{\epsilon>0}  B_1(\epsilon) =B_1$. Since $\lambda(\cdot,\cdot) \in C( B_1(\epsilon)\times C^o)$,  Proposition \ref{propSion} and  Sion's theorem yield 
		$$
		\sup_{u\in B_1(\epsilon)}\inf_{v \in C^o}\lambda(u,v)=
		\inf_{v \in C^o}\sup_{u\in B_1(\epsilon)}\lambda(u,v),~~\epsilon\in (0,1).
		$$
Using $\lambda(\cdot,\cdot) \in C( B_1(\epsilon)\times C^o)$ it is not hard to show that
\begin{equation}\label{eq:converg1}
	\inf_{v \in C^o}\sup_{u\in B_1(\epsilon)}\lambda(u,v)	\to \inf_{v \in C^o}\sup_{u\in B_1}\lambda(u,v),~~ \mbox{as}~~\epsilon \to 0. 
\end{equation}
 Since $\cup_{\epsilon>0}  B_1(\epsilon) =B_1$,  there exists $\epsilon_0>0$ such that $u_{C} \in B_1(\epsilon)$ for any $\epsilon<\epsilon_0$. Hence, 
		$$
		\sup_{u\in B_1(\epsilon)}\inf_{v \in C^o}\lambda(u,v)=\sup_{u\in B_1}\inf_{v \in C^o}\lambda(u,v)=\overline{\lambda}_C(A), ~~\forall \epsilon \in (0,\epsilon_0),
		$$
and consequently, by \eqref{eq:converg1} we get  the minimax principle for $\lambda(u,v)$  in $C\times C^o$
	$$
	\sup_{u\in C}\inf_{v \in C^o}\lambda(u,v)=
	\inf_{v \in C^o}\sup_{u\in C}\lambda(u,v).
	$$
Similar arguments apply for proving the existence of the left quasi-eigenvector $v_C(A)\in C$ and that the minimax principle for $\lambda(u,v)$ in  $C^o\times C$ holds.

Assume that $u_C(A) \in C^o$. This implies $\overline{\lambda}_C(A)=\sup_{u\in C}\inf_{v \in C^o}\lambda(u,v)=\sup_{u\in C^o}\inf_{v \in C^o}\lambda(u,v)$. Hence, we have
\begin{align*}
	\overline{\lambda}_C(A)=\inf_{v \in C^o}\sup_{u \in C}\lambda(u,v)\geq \inf_{v \in C^o}\sup_{u \in C^o}\lambda(u,v)\geq \sup_{u\in C^o}\inf_{v \in C^o}\lambda(u,v)=\overline{\lambda}_C(A), 
\end{align*} 
and thus,
$$
\overline{\lambda}_C(A)= \inf_{v \in C^o}\sup_{u \in C^o}\lambda(u,v)= \sup_{u\in C^o}\inf_{v \in C^o}\lambda(u,v), 
$$
that is, the minimax principle  in  $C^o\times C^o$ holds true. The same reasoning applies to  the case $v_C(A) \in C^o$.

We thus have, if $u_C(A), v_C(A) \in C^o$, then $\lambda_C(A):=\overline{\lambda}_C(A)=\underline{\lambda}_C(A)$, and
\begin{align*}
\lambda_C(A)=\inf_{v \in C^o}\lambda(u_C(A),v)\leq \lambda(u_C(A),v_C(A)),\\
\lambda_C(A)=\sup_{u\in C^o}\lambda(u,v_C(A))\geq \lambda(u_C(A),v_C(A)).
\end{align*}
Thus, $\lambda_C(A)=\lambda(u_C(A),v_C(A))=\inf_{v \in C^o}\lambda(u_C(A),v)=\sup_{u\in C^o}\lambda(u,v_C(A))$. Since  $u_C(A), v_C(A)$ are internal points in open set $C^o$, this implies $\partial_u \lambda(u_C(A),v_C(A))=0$, $\partial_v \lambda(u_C(A),v_C(A))=0$, and thus, $u_C(A)$, $v_C(A)$ are right and left eigenvectors of $A$ with eigenvalue $\lambda_C(A)$.

{\it \underline{Proof of Corollary \ref{cor1}}}.\, 

\textit{(i)}  By Theorem \ref{theorem1} there exists a left   quasi-eigenvectors $v_C(A) \in C$ of $A$. Hence, since $\operatorname{span}(\phi_i )\cap C^o \neq \emptyset$,
$$
\underline{\lambda}_{C}(A) =\sup _{u \in C^o}\frac{\left\langle A u, v_C(A)\right\rangle}{\left\langle u, v_C(A)\right\rangle}\geq \frac{\left\langle A \phi_i, v_C(A)\right\rangle}{\left\langle \phi_i, v_C(A)\right\rangle}=\lambda_i(A).
$$
The case $\operatorname{span}(\psi_i )\cap C^o \neq \emptyset$ is handled by the similar method. 

\textit{(ii)} If $\operatorname{span}(\phi_i )\cap C^o \neq \emptyset,\operatorname{span}(\psi_i )\cap C^o \neq \emptyset$, then by the above $\overline{\lambda}_C(A)\leq \lambda_i(A)\leq \underline{\lambda}_C(A)$. Since $\overline{\lambda}_C(A)\geq \underline{\lambda}_C(A)$, we derive $\overline{\lambda}_C(A)= \lambda_i(A)=\underline{\lambda}_C(A)=\lambda(\phi_i,\psi_i) $. Hence, using \eqref{eq:DefQQ}, \eqref{eq:DefQQ2} we infer that $u_C(A)=\phi_i$, $v_C(A)=\psi_i$ up to multipliers. The converse statement can be proved similar.

{\it \underline{Proof of Corollary \ref{cor2}}}\,

\textit{(i)}\, Let $\lambda_i(A) \in \mathbb{C}$, $i=1,\ldots,n$ be an eigenvalue of $A$. Suppose $\phi_i \in  \mathbb{C}^n$ is a corresponding right eigenvector, i.e., $A\phi_i=\lambda_i(A) \phi_i$. Since $A\geq 0$, $|\lambda_i(A)||\phi_i|=|\lambda_i(A) \phi_i|\leq A|\phi_i|$, and consequently, 
$$
\lambda(|\phi_i|,v)= \frac{\langle A |\phi_i|, v\rangle}{\langle |\phi_i|, v\rangle} \geq |\lambda_i(A)|, ~~\forall v \in S_+^o,~~i=1,\ldots,n. 
$$
Hence by \eqref{eq:DefQQ} we have
$$
\overline{\lambda}_{S_+}(A)=\sup _{u\in S_+}\inf _{v\in S_+^o}\frac{\left\langle A u, v\right\rangle}{\left\langle u, v\right\rangle}\geq \inf _{v\in S_+^o} \frac{\langle A |\phi_i|, v\rangle}{\langle |\phi_i|, v\rangle}\geq  |\lambda_i(A)|
$$
$\forall i=1,\ldots,n$, and thus, we obtain \eqref{eq:Noneg1}. Assume that $\overline{\lambda}_{S_+}(A)$ is an eigenvalue. Then by \eqref{eq:Noneg1}, $\overline{\lambda}_{S_+}(A)>0$, and therefore, $\overline{\lambda}_{S_+}(A) \in \{|\lambda_j(A)|,~j=1,\ldots,n\}$. Hence  \eqref{eq:Noneg} holds true.

\textit{(ii)}\,   Clearly, there exists a sufficiently large $\gamma>0$ such that $A+\gamma I$ is a nonnegative matrix, where  $I$ is an identity matrix. Then by \textit{(i)}, Corollary \ref{cor2},  we obtain $\overline{\lambda}_{S_+}(A)+\gamma\geq |\lambda_i(A)+\gamma|\geq \operatorname{Re}(\lambda_i(A)) +\gamma$, $\forall i=1,\ldots,n$, and consequently, $\overline{\lambda}_{S_+}(A)\geq  \operatorname{Re}(\lambda_i(A)) $, $i=1,\ldots,n$, which yields \eqref{eq:Re1}. The proof of \eqref{eq:Re} is similar to \eqref{eq:Noneg}.

{\it \underline{Proof of Corollary \ref{cortheorem4}}}\, 
 Given \eqref{inv}-\eqref{inv2}, it is sufficient to prove the assertions of the corollary for diagonal matrix $A= \operatorname{diag}(\lambda_1, ...,\lambda_n)$. In this case, $\phi_i= e_i$, $i\in \{1,\ldots,n\}$ and we may assume that $\lambda_1\leq  ...\leq \lambda_n$. Let $C\in K_O(\mathbb{R}^n)$. It is easily seen that  the only following is possible: 1) $\operatorname{span}(e_i)\cap \partial C  \neq \emptyset$,  $\forall i\in \{1,\ldots,n\}$; 2) $ \exists e_i$ such that $\operatorname{span}(e_i)\cap C^o \neq \emptyset$. Note that  cases 1) and 2) imply assumptions (a) and (b) of the corollary are satisfied, respectively. Moreover, case 1) means that $C$ coincides with some orthant of $\mathbb{R}^n$.

Assume that 1) is satisfied.  Since $u_iv_i\geq 0$, $i=1,\ldots,n$, for any $u:=(u_1\ldots u_n), v_n:=(v_1\ldots v_n)$ belonging the orthant $C$, we have
$$
\overline{\lambda}_{C}(A)=\sup _{u \in C}\inf _{v\in C^o}\frac{\left\langle A u, v\right\rangle}{\left\langle u, v\right\rangle}\leq \lambda_n+\sup _{u \in C}\inf _{v\in C^o}\frac{ \sum_{i=1}^n (\lambda_i-\lambda_n) u_i v_i}{\left\langle u, v\right\rangle}\leq \lambda_n.
$$
 On the other hand,
$$
\overline{\lambda}_{C}(A)=\sup _{u \in C}\inf _{v\in C^o}\frac{\left\langle A u, v\right\rangle}{\left\langle u, v\right\rangle}\geq \inf _{v\in C^o}\frac{\left\langle A e_n, v\right\rangle}{\left\langle e_n, v\right\rangle}= \lambda_n. 
$$
Hence, we get $\overline{\lambda}_{C}(A)= \lambda_n=	\max_{1\leq  j\leq n}\lambda_j(A)$. By a similar argument, we have $\underline{\lambda}_{C}(A)=\min_{1\leq  j\leq n}\lambda_j(A)$. 

Assuming that 2) is true, assertion (b) follows directly from Corollary \ref{cor1}.

\section{Proofs of Theorems \ref{theorem2}}

Since $v_C(A) \in C^o$,  $\inf_{u\in\partial \dot{B}_1}\langle u, v_{C}(A)\rangle  >0$, where $\partial \dot{B}_1=\{x \in C:~||x||=1\}$. Thus, $\lambda(\cdot, v_C(A))$ is a continuous bounded function on $C$, and therefore, we have  $\underline{\lambda}_{C}(A)=\sup _{u \in C^o}\lambda(u, v_C(A))=\sup _{u \in C}\lambda(u, v_C(A))$. From this and 
by Theorem \ref{theorem1} we derive 
\begin{align}
	\overline{\lambda}_{C}(A+D)= & \sup _{u \in C}\inf _{v\in C^o}\frac{\langle (A+D) u, v\rangle}{\langle u, v\rangle}\leq \sup _{u \in C}\frac{\langle (A+D) u, v_{C}(A)\rangle}{\langle u, v_{C}(A)\rangle}\leq \nonumber \\ 
	&\sup _{u \in C}\frac{\langle A u, v_{C}(A)\rangle}{\langle u, v_{C}(A)\rangle}+\sup _{u \in C}\frac{\langle D u, v_{C}(A)\rangle}{\langle u, v_{C}(A)\rangle}=\underline{\lambda}_{C}(A)+\sup _{u \in C^o}\frac{\langle D u, v_{C}(A)\rangle}{\langle u, v_{C}(A)\rangle}.\label{eq:IneqF}
\end{align}
Note that
$$
\sup _{u \in C^o}\frac{\langle D u, v_{C}(A)\rangle}{\langle u, v_{C}(A)\rangle}\leq \|D\|_M \|v_{C}(A)\|\sup _{u \in C}\frac{\| u\| }{\langle u, v_{C}(A)\rangle}.
$$
Since  $\inf_{u\in \partial \dot{B}_1}\langle u, v_{C}(A)\rangle  >0$, by 
 the homogeneity of  $\frac{\| \cdot\| }{\langle \cdot, v_{C}(A)\rangle}$  we have
$$
0<\sup _{u \in C^o}\frac{\| u\| }{\langle u, v_{C}(A)\rangle}=c_1(A,C)<+\infty,
$$
Thus we have proved \eqref{cont0}. Clearly, if $D\leq 0$, then \eqref{eq:IneqF} implies \eqref{cont0AD}. 
Proof of \eqref{cont00}, \eqref{cont00AD} are similar.

Let $u_C(A), v_C(A) \in C^o$. Then by Theorem \ref{theorem1}, $\lambda_{C}(A)=\overline{\lambda}_{C}(A)=\underline{\lambda}_{C}(A)$.  Since $\overline{\lambda}_{C}(A+D)\geq \underline{\lambda}_{C}(A+D)$,  \eqref{cont0}, \eqref{cont00} imply
\begin{align*}
	-c_2(A,C)\|D\|_M\leq \underline{\lambda}_{C}(A+D)-\lambda_{C}(A)\leq \overline{\lambda}_{C}(A+D)-\lambda_{C}(A)\leq c_1(A,C)\|D\|_M,
\end{align*}
$\forall D \in M_{n\times n}(\mathbb{R})$. Thus, we get \eqref{contOC}, \eqref{contOCO}.

\section{Proofs of Theorem \ref{theorem3}}

	To be specific, consider the case $a_{ij}\leq 0$, $\forall i\neq j$. In view of that
	$A -\overline{\lambda}_{S_+}(A) I \in M_{n\times n}^{isc}(\mathbb{R})$, the inequality $ A u_{S_+}-\overline{\lambda}_{S_+}(A) u_{S_+}\geq 0 $,  for $u_{S_+} \in S_+$ implies that $u_{S_+} \in S_+^o$. Therefore, $\langle u_{S_+},  v_{S_+} \rangle \neq 0$, and thus, $\lambda(u_{S_+}, v_{S_+})$ is well-defined. Hence, by Theorem \ref{theorem1}
	$$
	\overline{\lambda}_{S_+}(A)=\lambda_{S_+}(A)=\sup_{u \in S_+^o}\inf_{v \in S_+}\lambda(u,v) =\inf_{v \in S_+}\lambda(u_{S_+} ,v)\leq \lambda(u_{S_+}, v_{S_+}).
	$$
	On the other hand,
	$$
\overline{\lambda}_{S_+}(A) \geq	\underline{\lambda}_{S_+}(A)=\sup_{u\in S_+^o}\lambda(u,v_{S_+})\geq  \lambda(u_{S_+}, v_{S_+})
	$$
	Thus,
		\begin{equation}\label{eq:irre5}
		\lambda_{S_+}(A)=\underline{\lambda}_{S_+}(A)=\inf_{v \in S_+}\lambda(u_{S_+} ,v)=\sup_{u\in S_+}\lambda(u,v_{S_+})= \lambda(u_{S_+}, v_{S_+}).
	\end{equation}
Hence and since $u_{S_+} \in  S_+^o$, we have $\partial_u\lambda(u_{S_+},v_{S_+})= 0$, i.e., $A^Tv_{S_+}= \lambda_{S_+}(A) v_{S_+}$.
	 Since $A^T -\lambda_{S_+}(A) I \in M_{n\times n}^{isc}(\mathbb{R})$, equality $ A^T v_{S_+}-\lambda_{S_+}(A) v_{S_+}= 0$ can only be achieved if $v_{S_+} \in S_+^o$, and hence, by \eqref{eq:irre5} 
	we get  $\partial_v\lambda(u_{S_+},v_{S_+})=0$. Thus, the first part of the theorem is  proved.

The uniqueness of  $u_{S_+}(A)$ and $ v_{S_+}(A)
$ up to multipliers is a consequence of the Perron–Frobenius theorem. Indeed, since by the assumption  $A$ is  an irreducible matrix with  sign-constant elements off the diagonal, one can find a sufficiently large $\gamma>0$ such that $(A+\gamma I)$ or $(-A+\gamma I)$ are irreducible matrices with non-negative elements. Hence  by the Perron–Frobenius theorem $u_{S_+}(A)$ and $ v_{S_+}(A)
$ are unique up to multipliers of the right and left eigenvectors of $(A+\gamma I)$ or $(-A+\gamma I)$. 

Since $u_{S_+}(A), v_{S_+}(A) \in S_+^o$, Theorem \ref{theorem2} yields \eqref{eq:ConC}.

{\it Proof of Corollary \ref{cor4}}.\,

	Let  $C'=UC$ with $U \in O(n)$. Observe
	\begin{equation*}
		\overline{\lambda}_{C'}(A)=\overline{\lambda}_{UC}(A)=\sup _{u\in C}\inf _{v\in C^o}{\frac{\left\langle U^TA U u, v\right\rangle}{\left\langle u, v\right\rangle}}=\overline{\lambda}_{C}( U^TA U).
	\end{equation*}	
Since $u_C(A), v_C(A) \in C^o$,  Theorem \ref{theorem2} yields 
	\begin{equation}\label{eq: 4.2}
		|\overline{\lambda}_{C'}(A)- \lambda_{C}(A)|=|\overline{\lambda}_{C}( U^TA U)- \lambda_{C}(A)|\leq c_0(A,C)\| U^TA U-A\|.
	\end{equation}
	By Stone's theorem there exists a self-adjoint matrix $G$ such that  $U=\exp(G)$, and therefore,  $U=I+G+\bar{o}(\|G\|)$ for sufficiently small $\|G\|$, where $\bar{o}(\|G\|)$ is a little-o of $\|G\|$ as $\|G\| \to 0$. From this $ U^TA U=A+[A,G]+\bar{o}(\|G\|)$ with $[A,G]:=AG-GA$, and thus 
	\begin{equation}\label{eq:UTU}
		\| U^TA U-A\| =\|[A,G]+\bar{o}(\|G\|)\|\leq \|G\|\|[A,\tilde{G}]+\bar{o}(1)\|\leq c_3(A,C) \|G\|
	\end{equation}
	for sufficiently small $\|G\|$. Here  $\tilde{G}=G/\|G\|$, and a constant $c_3(A,C) \in (0,+\infty)$ does not depends on $G$. Since $d(C, C')=\|U-I\|=\|G+\bar{o}(\|G\|)\|$, we have $\|G\| \to 0$ as $d(C, C') \to 0$. This by \eqref{eq: 4.2}, \eqref{eq:UTU} implies
	$$
	|\overline{\lambda}_{C'}(A)- \lambda_{C}(A)|	 \leq c_3(A,C)d(C, C').
	$$
	The rest of \eqref{eq:contCone} is obtained similarly.
	
\section{Proof of Theorem \ref{theorem44}}
	
First we prove	$(2^o)$. It is easily follows from  \eqref{eq:DefQQ} 
	\begin{align}\label{ss1}
		&\overline{\lambda}_C(A)=\sup _{u \in C}\inf _{v\in C^o}\frac{\left\langle A u, v\right\rangle}{\left\langle u, v\right\rangle}\leq \sup _{u \in \mathbb{R}^n\setminus 0}\frac{\left\langle A u, u\right\rangle}{\left\langle u, u\right\rangle}=\sup _{x\in \mathbb{C}^n\setminus 0: \operatorname{Im} x=0}\frac{\langle\langle A x, x\rangle\rangle}{\langle\langle x, x\rangle\rangle}.
	\end{align}
	 Here  $\langle\langle x,y\rangle\rangle:=\sum_{i=1}^n x_i \overline{y}_i$, $x,y \in \mathbb{C}^n$.
	The set  $J:=\{z =\langle\langle Ax,x\rangle\rangle\mid x \in \mathbb{C}^n, ~\langle\langle x, x\rangle\rangle = 1\}$ is called  the field of values of $A$ (see \cite{marcus}).  It is known that for the normal matrix $A$, $J$  coincides with the convex hull $H(\lambda(A))$ of the set of eigenvalues $\{\lambda_i(A),~i=1,\ldots,n\}$ (see \cite{marcus}, p. 168). 
	This by \eqref{ss1} implies that  
	$$
	\overline{\lambda}_C(A)\leq \sup \operatorname{Re}(H(\lambda(A))) =\max\{\operatorname{Re}(\lambda_i(A)),~i=1,\ldots,n\}.
	$$
	Similarly,  from  \eqref{eq:DefQQ} and by the minimax principle \eqref{eq:mimaPrin}  in $C\times C^o$ we have
	\begin{align}\label{ss2}
		&\overline{\lambda}_C(A)=\inf _{v\in C^o}\sup _{u \in C}\frac{\left\langle A u, v\right\rangle}{\left\langle u, v\right\rangle}\geq \inf _{v  \in \mathbb{R}^n\setminus 0}\frac{\left\langle A v, v\right\rangle}{\left\langle v, v\right\rangle},
	\end{align}
	and therefore $
	\overline{\lambda}_C(A)\geq \inf \operatorname{Re}H(\lambda(A)) =\min\{\operatorname{Re}\lambda_i(A),~i=1,\ldots,n\}$.
	In the same way, using the minimax principle \eqref{eq:mimaPrin}  in $C^o\times C$ we derive
	$$
	\min\{\operatorname{Re}\lambda_i(A),~i=1,\ldots,n\}\leq \underline{\lambda}_C(A) \leq \max\{\operatorname{Re}(\lambda_i(A)),~i=1,\ldots,n\}
	$$

Let us now prove	$(1^o)$.  Observe,
	$$
	\left\langle\frac{A+A^T}{2}u,u \right\rangle =\left\langle A u, u\right\rangle,~~\forall u \in \mathbb{R}^n.
	$$
	Hence, by \eqref{ss1}, \eqref{ss2}
	$$
	\inf _{u \in C^o}\frac{\left\langle (A+A^T) u, u\right\rangle}{2\left\langle u, u\right\rangle}\leq \overline{\lambda}_C(A)\leq \sup _{u \in C}\frac{\left\langle (A+A^T) u, u\right\rangle}{2\left\langle u, u\right\rangle}.
	$$
	Note that $A+A^T$ is a symmetric, and consequently normal matrix. Hence, the same arguments as in the above proof of  $(1^o)$  implies that
	$$
	\min\{\lambda_i(\frac{A+A^T}{2})\mid~i=1,\ldots,n\}\leq \overline{\lambda}_{C}(A)\leq 	\max\{\lambda_i(\frac{A+A^T}{2})\mid~i=1,\ldots,n\},
	$$
	where we  take into account that the eigenvalues of the symmetric matrix $A+A^T$ are real.
	The inequality for $\underline{\lambda}_{C}(A)$ in \eqref{eq:norEst12} is obtained similarly.
	\qed

\section{Proof of Theorem \ref{theorem4}}

It is well known (see, e.g., \cite{marcus}) that for any normal  matrix $A$ there corresponds an  orthogonal matrix $U_A$ that reduces $A$ to the canonical form, i.e.,
\begin{equation}\label{eq:canNorm}
	{\displaystyle U^T_AA U_A= \mathcal{O}_A={\begin{bmatrix}{\begin{matrix}Q(r_1, \theta_1)&&\\&\ddots &\\&&Q(r_l, \theta_l)\end{matrix}}&0\\0&{\begin{matrix}\mu_{2l+1}&&\\&\ddots &\\&&\mu_{n}\end{matrix}}\\\end{bmatrix}},}
\end{equation}
where  $\theta_i \in [0,2\pi)$, $r_i \in \mathbb{R}$, $i=1,\ldots l$, $0\leq l\leq n/2$, $\mu_i \in \mathbb{R}$, $i=2l+1,\ldots,n$, and the matrix $Q(r,\theta)$ is defined as follows
$$
Q(r,\theta):=
r \left[ {\begin{array}{cc}
		\cos\,\theta & -\sin\, \theta \\
		\sin\,\theta & \cos\,\theta \\
\end{array} } \right], ~~\theta \in [0,2\pi), ~~r \in \mathbb{R}.
$$
 Note that $\lambda_{1}(Q(r,\theta))= r(\cos\, \theta +i \sin\,\theta)$, $\lambda_2(Q(r,\theta))= r(\cos\,\theta -i \sin\,\theta)$, and thus, $\operatorname{Re} \lambda_{j}(Q(r,\theta))=r\cos\, \theta$, $j=1,2$.

In light of \eqref{eq:canNorm} and  \eqref{inv}-\eqref{inv2}, it is sufficient to prove the assertions of the theorem for the  matrix in the canonical form $\mathcal{O}_A$ with $C\in K_0(\mathbb{R}^n)$. Without loss of generality, we may assume that $l=n/2$. Note that in this case we have $V_i=\operatorname{span}(\{e_i,e_{i+1}\})$, $i=1,\ldots, l$.

Let $C\in K_O(\mathbb{R}^n)$. It is easily seen that  the only following is possible: 1) $V_i\cap \partial C  \neq \emptyset$,  $\forall i\in \{1,\ldots,n\}$, or 2) $ \exists V_i$ such that $V_i\cap C^o \neq \emptyset$. Note that  1) and 2) imply assumptions of $(1^o)$ and $(2^o)$ of the theorem, respectively.  Additionally, 1) implies that $C$ coincides with some orthant, which without losing generality may be assumed to be a nonnegative orthant of $\mathbb{R}^n$.

Assume that 1) is satisfied. Observe,
$$
\overline{\lambda}_{C}(\mathcal{O}_A)=\sup _{u \in C}\inf _{v\in C^o}\frac{\left\langle \mathcal{O}_A u, v\right\rangle}{\left\langle u, v\right\rangle}\geq \sup _{u \in V_i}\inf _{v\in C^o}\frac{\left\langle \mathcal{O}_A u, v\right\rangle}{\left\langle u, v\right\rangle}\geq  \operatorname{Re}\lambda_i(\mathcal{O}_A),~i=1,\ldots,l, 
$$
and thus, $\overline{\lambda}_{C}(\mathcal{O}_A)\geq \max_{1\leq  j\leq n}\operatorname{Re}\lambda_j(A)$. 
On the other hand, by Theorem \ref{theorem44}, we have  $\overline{\lambda}_{C}(\mathcal{O}_A)\leq \max_{1\leq  j\leq n}\operatorname{Re}\lambda_j(\mathcal{O}_A)$. Hence, we get  $\overline{\lambda}_{C}(\mathcal{O}_A)= \max_{1\leq  j\leq n}\operatorname{Re}\lambda_j(\mathcal{O}_A)$.
By a similar argument,  $\underline{\lambda}_{C}(\mathcal{O}_A)=\min_{1\leq  j\leq n}\lambda_j(\mathcal{O}_A)$. 

Assume that 2) is true.  Without loss of generality we may assume that $i=1$, i.e.,  $V_1\cap C^o \neq \emptyset$, and therefore, $\operatorname{span}(e_1)\cap C^o \neq \emptyset$ or  $\operatorname{span}(e_2)\cap C^o \neq \emptyset$. Suppose $\operatorname{span}(e_1)\cap C^o \neq \emptyset$ is fulfilled, and $r\sin\,\theta>0$.

By Theorem \ref{theorem1} there exists a    quasi-eigenvectors $v_C(A) \in C$, and thus, $$
\langle A^Tv_C(A)- \underline{\lambda}_C(A)  v_C(A), w\rangle\leq  0,~\forall w \in C^o.
$$
Hence, $\langle A^Tv_C(A)- \underline{\lambda}_C(A)  v_C(A), e_1 \rangle\leq  0$, and consequently, $(r\cos\,\theta -\underline{\lambda}_C(A) )v_1+v_2 r \sin\,\theta  \leq 0$.
Since $v_1,v_2, r\sin\,\theta\geq 0$, we derive from here that $r\cos\,\theta\leq \underline{\lambda}_C(A) $. 
Similarly, using  $u_C(A) \in C$ we derive $\langle Au_C(A)- \overline{\lambda}_C(A)  u_C(A), e_1 \rangle\geq  0$, and consequently, $(r\cos\,\theta -\overline{\lambda}_C(A) )u_1-u_2 r \sin\,\theta \geq 0$. Hence, by $u_1,u_2, r\sin\,\theta\geq 0$, we obtain $r\cos\,\theta\geq \overline{\lambda}_C(A) $. Since $\overline{\lambda}_C(A) \geq \underline{\lambda}_C(A) $, we infer $\overline{\lambda}_C(A)=\underline{\lambda}_C(A)=r\cos\,\theta=\operatorname{Re} \lambda_{1}(\mathcal{O}_A)$.  The case $r\sin\,\theta<0$ is handled in the same way.


\end{document}